\date{}
\title{Integrability and Linearizability of a Family of Three-Dimensional Polynomial Systems}
\author{Bo Huang, Ivan Mastev and Valery Romanovski\footnote{Corresponding author.}\\
	\it\footnotesize LMIB -- School of Mathematical Sciences,
Beihang University, Beijing 100191, China \\
	\it\footnotesize bohuang0407@buaa.edu.cn\\
	\it\footnotesize Center for Applied Mathematics and Theoretical Physics, SI-2000 Maribor, Slovenia\\
    \it\footnotesize ivan.mastev@student.um.si\\
    \it\footnotesize Center for Applied Mathematics and Theoretical Physics, SI-2000 Maribor, Slovenia\\
    \it\footnotesize Faculty of Natural Science and Mathematics, University of Maribor, SI-2000 Maribor, Slovenia\\
    \it\footnotesize Faculty of Electrical Engineering and Computer Science, University of  Maribor, SI-2000 Maribor, Slovenia\\
	\it\footnotesize valerij.romanovskij@um.si}
\newtheorem {theorem*}{Theorem}
\newtheorem{theorem} {Theorem}
\newtheorem{definition}{Definition}
\newtheorem{lemma}{Lemma}
\newtheorem{remark}{Remark}
\newtheorem{conjecture} {Conjecture}
\newtheorem{open problem} {Open problem}
\numberwithin{equation}{section}
\begin{document}
\maketitle
\noindent {\bf Abstract.} We investigate the local integrability and linearizability of a family of three-dimensional polynomial systems with the matrix of the linear approximation having the eigenvalues $1, \zeta, \zeta^2 $, where $\zeta$ is a primitive cubic root of unity.  We establish a criterion for the convergence of the Poincar\'e--Dulac normal form of the systems and examine the relationship between the normal form and integrability. Additionally, we introduce an efficient algorithm to determine the necessary conditions for the integrability of the systems. This algorithm is then applied to a quadratic subfamily of the systems to analyze its integrability and linearizability. Our findings offer insights into the integrability properties of three-dimensional polynomial systems.
\smallskip

\noindent {\bf Math Subject Classification (2020).} 34C07; 37G15; 68W30

\smallskip

\noindent {\bf Keywords.} {Integrability, linearizability, normal form, polynomial systems}

\section{Introduction}

The integrability of polynomial differential equations is a fundamental problem in the qualitative theory of differential equations. An efficient  approach for studying local integrability of nonlinear systems around equilibrium points involves transforming the original system into a more analytically tractable form using the theory of normal forms. Originating from the works of Poincar\'e, Dulac, and Lyapunov, this theory employs coordinate transformations to simplify the system, facilitating the analysis of its integrability and linearizability.

Consider an $n$-dimensional system of ODEs
\begin{equation}\label{firstSystem}
    \dot{x} = A x + X(x), \quad x \in \mathbb{C}^n,
\end{equation}
where $A$ is a diagonal matrix with eigenvalues $\lambda = (\lambda_1, \dots, \lambda_n),$ all of which are different from zero, $x = (x_1, \dots, x_n)^\top, X(x) = (X_1(x), \dots, X_n(x))^\top, X_i$ are series with complex coefficients which start with terms of degree greater than or equal to two. The  collection of eigenvalues  $(\lambda_ {1},\dots,\lambda _ {n})$ of $A$
is said to be resonant if there is a relation of the form
\begin{equation}\label{resonant}
 \alpha _ {1} \lambda _ {1} + \dots + \alpha _ {n} \lambda _ {n}  - \lambda_k=0
\end{equation}
for some  $  k \in \{ 1, \dots, n \} $,
with  $  \alpha_ {i} \in \mathbb{N}_0= \mathbb{N} \cup \{ 0 \} $,
$  \sum _ {i=1}  ^ {n} \alpha _ {i} \geq  2 $.

It is known (see e.g., \cite{Bib}) that any formal invertible change of coordinates of the form
\begin{equation}\label{substitutionNormal}
    x = y + h(y) = y + \sum_{j=2}^{\infty} h_j(y),
\end{equation}
with $h_j(y)$ being a vector of polynomials of degree $j,$ brings system (\ref{firstSystem}) to a system of a similar form, that is,
\begin{equation}\label{secondSystem}
    \dot{y} = A y + G(y), \quad G(y) = \sum_{j=2}^{\infty} G_j(y),
\end{equation}
for some vectors $G_j(y)$ of polynomials of degree $j$. For $\alpha=(\alpha_1, \dots, \alpha_n) \in \mathbb{N}_{0}^{n},$ define $|\alpha| = \alpha_1 + \cdots + \alpha_n.$ Let $x^{\alpha} = x_1^{\alpha_1} \cdots x_n^{\alpha_n}.$ Consider the monomial $x^{\alpha},$ for $|\alpha| > 1,$ in $e_k X(x),$ where $e_k$ is the $n$-dimensional row unit vector for all $k \in \{ 1, 2, \dots, n \}.$ This monomial  is called \textit{resonant} if $\lambda $ and $\alpha$ satisfy \eqref{resonant}. Similarly, the monomial $y^{\alpha} = y_1^{\alpha_1} \cdots y_n^{\alpha_n}$ in $e_k G(y)$ is resonant if (\ref{resonant}) holds. System (\ref{secondSystem}) is said to be in the \textit{Poincar\'e--Dulac normal form}, or simply in the normal form, if $G(y)$ includes only resonant terms. According to the Poincar\'e--Dulac theorem \cite{Poincare, Dulac2}, system (\ref{firstSystem}) can be transformed into a normal form by a substitution of the form  (\ref{substitutionNormal}). This transformation is referred to as a normalization or a normalizing transformation and is not unique.  However, the normalization containing only non-resonant terms is unique. It is called the distinguished normalizing transformation, and the corresponding normal form is called the distinguished normal form.

The following theorem of Siegel \cite{Sie} gives a criterion for convergence of the normalizing transformation in the case when
the eigenvalues of the matrix $A$ are non-resonant.

\begin{theorem} \label{th_S}
Assume that there are constants $C > 0, \sigma  > 0$
such that for all {non-negative} integer tuples
$\alpha =(\alpha _1, \dots, \alpha_n ), $
$\sum \alpha_i > 1$
\begin{equation}\label{cond_S}
|\sum_{i=1}^n \alpha_i \lambda_i - \lambda_j |\ge C (\alpha_1+\dots+\alpha_n)^{-\sigma}.
\end{equation}
Then there is a convergent transformation to normal form.
\end{theorem}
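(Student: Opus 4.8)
The plan is to construct the normalizing transformation \eqref{substitutionNormal} recursively from the homological equations and then establish its convergence by the method of majorants; the only genuinely delicate point is an arithmetic estimate that keeps the accumulation of the small divisors $\langle\alpha,\lambda\rangle-\lambda_k$ under control.

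\emph{Step 1: the formal transformation.} Observe first that \eqref{cond_S} with $C>0$ rules out every resonance \eqref{resonant}, so a system in normal form necessarily has $G\equiv 0$; thus the statement amounts to the analytic linearizability of \eqref{firstSystem}. Substituting $x=y+h(y)$ into \eqref{firstSystem} and imposing $\dot y=Ay$ gives
\[
\frac{\partial h}{\partial y}(y)\,Ay-A\,h(y)=X\big(y+h(y)\big).
\]
Comparing homogeneous parts of degree $j\ge 2$ yields $\mathcal L h_j=\Phi_j$, where $(\mathcal L h)(y)=\frac{\partial h}{\partial y}(y)\,Ay-A\,h(y)$ and $\Phi_j$ is a vector of polynomials of degree $j$ whose coefficients are universal polynomials with non-negative integer coefficients in the coefficients of $X_2,\dots,X_j$ and of $h_2,\dots,h_{j-1}$. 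Since $\mathcal L\big(y^\alpha e_k\big)=\big(\langle\alpha,\lambda\rangle-\lambda_k\big)y^\alpha e_k$ and, by \eqref{cond_S}, every number $\langle\alpha,\lambda\rangle-\lambda_k$ with $|\alpha|\ge 2$ is nonzero, the equation $\mathcal L h_j=\Phi_j$ has a unique solution, obtained by dividing each coefficient of $\Phi_j$ by the corresponding $\langle\alpha,\lambda\rangle-\lambda_k$. This determines $h=\sum_{j\ge 2}h_j$ as a formal series.

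\emph{Step 2: majorants and the obstruction.} Since $X$ is analytic near $0$, there are $M,\rho>0$ with $|X_{\alpha,k}|\le M\rho^{-|\alpha|}$ for all $\alpha,k$. Put $\omega_j=\min\{\,|\langle\alpha,\lambda\rangle-\lambda_k|:2\le|\alpha|\le j,\ 1\le k\le n\,\}$ and let $a_j$ majorize $\sum_{|\alpha|=j,\,k}|h_{\alpha,k}|$. Using the positivity of the coefficients in the recursion for $\Phi_j$, one gets inequalities of the form $a_j\le \omega_j^{-1}\,\big[\text{degree-}j\text{ part of }F(t+\varphi(t))\big]$, where $F(t)=M\sum_{j\ge 2}(t/\rho)^j$ and $\varphi(t)=\sum_{j\ge 2}a_jt^j$. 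If the factors $\omega_j^{-1}$ were replaced by $1$, the scalar equation $\varphi=F(t+\varphi)$ would be solved by a function analytic at $0$ (implicit function theorem), and a comparison of coefficients would conclude the proof. The obstruction is that each application of the recursion introduces a new factor $\omega_j^{-1}$, so along a recursion tree of depth comparable to $j$ one a priori accumulates a factor as large as $\prod_{i\le j}\omega_i^{-1}$, which by \eqref{cond_S} is of order $(j!)^{\sigma}C^{-j}$ and destroys convergence.

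\emph{Step 3: controlling the small divisors --- the main difficulty.} The heart of the proof is a purely arithmetic lemma exploiting the Diophantine bound \eqref{cond_S} in full strength: in the product of divisors $\prod\big(\langle\alpha,\lambda\rangle-\lambda_k\big)^{-1}$ attached to the nodes of the tree producing a coefficient of $h_j$, truly small divisors are rare. Concretely, one shows that if $\gamma,\gamma'$ are indices with $|\langle\gamma,\lambda\rangle-\lambda_k|$ and $|\langle\gamma',\lambda\rangle-\lambda_{k'}|$ both below a threshold $\delta$, then \eqref{cond_S} applied to a suitable combination of $\gamma$ and $\gamma'$ forces $|\gamma|+|\gamma'|$ to exceed a quantity of order $\delta^{-1/\sigma}$; hence, at each scale $2^{-m}$, the number of nodes of the tree carrying a divisor smaller than $2^{-m}$ is at most of order $j\,2^{-m/\sigma}$. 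Summing the contributions $m\,2^{-m/\sigma}$ over all scales $m$ shows that the whole product of small divisors attached to a degree-$j$ coefficient is bounded by $e^{\kappa j}$ for a constant $\kappa=\kappa(C,\sigma,n)$. Feeding the resulting bound $\sum_{|\alpha|=j,\,k}|h_{\alpha,k}|\le e^{\kappa j}\,b_j$, with $\sum_j b_jt^j$ convergent, back into the majorant inequality of Step 2 (after an exponential rescaling $t\mapsto e^{-\kappa}t$) shows that $\varphi$ has positive radius of convergence. Therefore $h$ converges in a neighbourhood of the origin, and \eqref{substitutionNormal} is an analytic transformation taking \eqref{firstSystem} to its (linear) normal form. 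I expect the combinatorial bookkeeping of Step 3 --- organizing the recursion as a tree and establishing the ``separation of small denominators'' from \eqref{cond_S} --- to be the principal obstacle; everything else is the standard majorant machinery.
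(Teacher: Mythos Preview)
The paper does not prove this theorem at all: it is stated as Siegel's classical result with a citation to \cite{Sie}, and the paper simply uses it as a black box (e.g., in Cases~(5), (8), (9) of the last theorem). So there is no ``paper's own proof'' to compare against.

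Your sketch is a reasonable outline of the classical Siegel argument---the formal construction via homological equations, the majorant scheme, and the crucial small-divisor counting that yields an exponential rather than factorial bound. Step~3 is indeed where all the work lies, and you have correctly identified the mechanism (Diophantine separation forces small divisors to be sparse along the recursion). As written it is a sketch rather than a proof: the tree organization, the precise definition of the ``suitable combination of $\gamma$ and $\gamma'$,'' and the summation over scales would all need to be made rigorous. But since the paper offers nothing beyond the citation, there is nothing further to compare.
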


It is clear that in the case of Siegel's theorem the normal form is
\begin{equation} \label{lin_A}
\dot x =A x.
\end{equation}

The next theorem of Pliss \cite{Pliss} treats the case when the set of eigenvalues is resonant, but the normal form is still linear.

\begin{theorem}\label{th_P}
Assume that:

(a) the nonzero elements among the $
\sum_{i=1}^n m_i \lambda_i -\lambda_k$ satisfy condition \eqref{cond_S};

(b) some formal normal form of \eqref{firstSystem} is linear system \eqref{lin_A}.

Then there is a convergent transformation of system \eqref{firstSystem} to normal form \eqref{lin_A}.
\end{theorem}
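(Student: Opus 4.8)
The plan is to establish convergence of the \emph{distinguished} normalizing transformation: hypothesis (b) will guarantee that this particular transformation actually linearizes the system, and hypothesis (a) will power a Siegel-type majorant estimate for it. The point of working with the distinguished transformation, rather than with an arbitrary linearizing one supplied by (b), is that its coefficients are generated by an explicit recursion in which small divisors appear in a controllable way, whereas a non-distinguished transformation carries resonant monomials with no a priori control.

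First I would recall from the Poincar\'e--Dulac theorem that \eqref{firstSystem} admits a unique normalizing transformation $x=T(y)=y+h(y)$, $h=\sum_{j\ge 2}h_j$, in which $h$ contains no resonant monomials; its homogeneous components are obtained from the homological equations
\begin{equation*}
\mathcal{L}_A h_m:=\frac{\partial h_m}{\partial y}Ay-Ah_m=\widetilde{X}_m,\qquad m\ge 2,
\end{equation*}
where $\widetilde{X}_m$ is a vector of degree-$m$ polynomials whose coefficients are polynomial expressions in the coefficients of $X_2,\dots,X_m$ and of $h_2,\dots,h_{m-1}$, and $h_m:=\mathcal{L}_A^{-1}\widetilde{X}_m$ is the solution with vanishing resonant part; the remaining resonant projection of $\widetilde{X}_m$ is what enters the normal form as $G_m$. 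Using (b), I would then show that $G_m\equiv0$, so that the construction never stalls. The argument is a coset computation: the set of formal linearizing transformations, nonempty by (b), equals $T_0\circ\mathcal{R}$, where $\mathcal{R}$ is the group of formal maps $y\mapsto y+g(y)$ with $g$ purely resonant; such $g$ satisfy $\mathcal{L}_Ag=0$ and hence conjugate $\dot y=Ay$ to itself. Composing a given linearizing $T_0$ with an element of $\mathcal{R}$ chosen degree by degree to annihilate the resonant part of the transformation produces a linearizing transformation free of resonant monomials, which by uniqueness is $T$. Hence the distinguished normal form is \eqref{lin_A}, and in particular each $\widetilde{X}_m$ is purely non-resonant, i.e. lies in $\operatorname{Im}\mathcal{L}_A$.

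Next comes the quantitative step. Fixing a suitable norm on homogeneous polynomial maps (for instance the $\ell^1$-norm on coefficients), hypothesis (a) yields: whenever $\sum_i\alpha_i\lambda_i-\lambda_k\ne0$ with $|\alpha|=m$ one has $\bigl|\sum_i\alpha_i\lambda_i-\lambda_k\bigr|\ge C\,m^{-\sigma}$; and whenever this quantity vanishes the corresponding coefficient of $\widetilde{X}_m$ is zero by the previous paragraph, so it is never divided by. Therefore
\begin{equation*}
\|h_m\|\le C^{-1}m^{\sigma}\,\|\widetilde{X}_m\|,\qquad m\ge2,
\end{equation*}
which is exactly the estimate available in the non-resonant setting of Siegel's Theorem~\ref{th_S}; the only difference is that a harmless subset of monomials is simply discarded instead of being small-divided. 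One then runs the classical Siegel majorant argument: form the scalar majorant series $\Phi(t)=\sum_{m\ge2}\|h_m\|t^m$, control it through the recursion by the analytic right-hand side $X$, derive for $\Phi$ a functional inequality analogous to the one in the proof of Theorem~\ref{th_S} with the factors $m^{\sigma}$ absorbed as in \cite{Sie}, and conclude that $\Phi$, hence $h$, converges in a neighborhood of the origin. Thus $T$ is a convergent transformation of \eqref{firstSystem} to \eqref{lin_A}.

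I expect the main obstacle to lie in the first step rather than in the estimate: one must argue carefully that (b), which only asserts that \emph{some} formal normal form is linear, forces the \emph{distinguished} normal form to be linear, since different normalizing transformations genuinely produce different normal forms. The coset and order-by-order cancellation argument sketched above is the right mechanism, but it requires verifying that resonant transformations conjugate $\dot y=Ay$ to itself and that the cancellation of resonant monomials can be performed degree by degree without spoiling the lower-order normalizations. Once the construction is known to proceed with $G_m\equiv0$, the convergence is a routine adaptation of the proof of Siegel's theorem, the key observation being that the resonances in (b) only \emph{remove} dangerous small divisors from the recursion rather than introduce new ones.
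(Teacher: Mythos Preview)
The paper does not give its own proof of this statement: Theorem~\ref{th_P} is quoted in the introduction as a known result of Pliss \cite{Pliss}, alongside Siegel's Theorem~\ref{th_S} and Bruno's result, and is used later only as a black box (for instance in the remark after \eqref{setS} and in the proof of Case~(1) of the last theorem). So there is no in-paper proof to compare your attempt against.

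That said, your outline is a correct and standard route to Pliss's theorem. The two ingredients you isolate are exactly the right ones: (i) the reduction from ``some formal normal form is linear'' to ``the distinguished normal form is linear'', via the fact that purely resonant near-identity maps commute with $\dot y=Ay$ and hence can be used degree by degree to strip resonant monomials from any given linearizing transformation without destroying linearization; and (ii) once the distinguished normal form is known to be linear, the only divisions occurring in the recursion for $h_m$ are by nonzero values $\sum_i\alpha_i\lambda_i-\lambda_k$, and hypothesis (a) places these under the Siegel bound, so Siegel's majorant argument carries over verbatim. Your identification of step (i) as the conceptually delicate point is accurate; the verification that $\mathcal{L}_A g=0$ for resonant $g$ is immediate from the definition of resonance, and the degree-by-degree cancellation is well-posed because altering the degree-$m$ resonant part of the transformation does not affect degrees $<m$ of the composed map.
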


It was proved by Bruno \cite{Br1,Br2} that if a certain condition (called the condition $\omega$) is satisfied and there is a normal form of \eqref{firstSystem} of the shape
\begin{equation} \label{lin_A_Br}
\dot x = (1+g(x)) A x,
\end{equation}
where $g(x)$ is a scalar function, then there is a convergent transformation to normal form \eqref{lin_A_Br}.

It was shown in \cite{W91} that for any $A$ the algebra of polynomial first integrals of \eqref{lin_A} is finitely generated. Moreover,  if the $\mathbb{Z}$-module $\mathcal{R}$ spanned by all non-negative integer solutions $(k_1, \dots , k_n)$ of $\sum_{i=1}^n
k_i \lambda_i = 0$ has rank $d$, then there are exactly $d$ independent polynomial first integrals (and also exactly $d$ independent formal first integrals) for system \eqref{lin_A}. It yields that the nonlinear system  \eqref{firstSystem} can have at most $d$ independent first integrals in a small neighborhood of the origin (see e.g., \cite{LPW}).

In the case when $d=n-1$ it is shown that that system \eqref{firstSystem} is (locally) completely integrable if it admits  $n-1$ independent first integrals in a neigborhood of the origin (otherwise it is said that it is partially integrable \cite{DRZ}). It was proved by Zhang \cite{Z,Z_b} that if some formal normal form of \eqref{firstSystem} admits $n-1$ independent formal first integrals, then \eqref{firstSystem} admits a convergent transformation to the distinguished  normal form, and there exist $n-1$ independent analytic first integrals. Moreover, in this case the distinguished normal form is of  the shape \eqref{lin_A_Br}. Rescaling the time in \eqref{lin_A_Br} by $1+g(x)$, we obtain a linear system, so in the case of complete integrability system \eqref{firstSystem} is orbitally linearizable.

The problem of complete integrability of three-dimensional polynomial systems has been extensively studied (see \cite{AR, Aziz, Aziz1, Aziz2,HHR} and references therein). It appears the case when the rank of $\mathcal{R}$ is smaller than $n-1$ has attracted much less attention. Recently, a system  of the form \eqref{firstSystem} with $A={\rm{diag}}(1,\zeta, \dots \zeta^{n} )$, where $n$ is a prime  number and $\zeta$ is the $n$-th primitive root of unity,  has been studied in \cite{GJR}. Clearly, in such case the algebra of polynomial first integrals of the linear system
\begin{equation} \label{Zlin}
\dot { x}=  {\rm{diag}}(1,\zeta, \dots \zeta^{n} ) { x}
\end{equation}
is generated by the single monomial
$x_1 x_2 \cdots x_n$. So, in this case the nonlinear system \eqref{firstSystem} with
$A={\rm{diag}}(1,\zeta, \dots \zeta^{n})$,
\begin{equation} \label{SYS}
\dot { x}=  {\rm{diag}}(1,\zeta, \dots \zeta^{n} ) { x}+X(x),
\end{equation}
can have at most one independent local analytic first integral,  which can be chosen in the form
\begin{equation}\label{int_Phi}
\Phi({\bf x})= x_1 x_2 \cdots x_n+h.o.t.
\end{equation}
Thus the study of local integrability of system \eqref{SYS} represents the simplest generalization of the Poincar\'e center problem from the two-dimensional case to the $n$-dimensional case (more detailed discussions about the center problem can be found in \cite{RS}).

In this paper we study system \eqref{SYS} in the case $n=3$. We limit our consideration to the polynomial systems, which, without loss of generality we write in the form

\begin{equation}\label{3D}
    \begin{aligned}
        \dot x_1 &= x_1 + \sum_{(p,q,r) \in S} a_{pqr}x_1^{p+1}x_2^{q}x_3^{r} = P(x_1,x_2,x_3), \\
        \dot x_2 &= \zeta \left( x_2 + \sum_{(p,q,r) \in S} b_{rpq}x_1^{r}x_2^{p+1}x_3^{q} \right) = Q(x_1,x_2,x_3), \\
        \dot x_3 &= \zeta^2 \left( x_3 + \sum_{(p,q,r) \in S} c_{qrp}x_1^{q}x_2^{r}x_3^{p+1} \right) = R(x_1,x_2,x_3),
    \end{aligned}
\end{equation}
where
\begin{equation} \label{setS}
S = \{ (p_j, q_j, r_j) \mid p_j + q_j + r_j \geq 1, j = 1, \dots, l \}  \subset \mathbb{N}_{-1}\times\mathbb{N}_{0}\times\mathbb{N}_{0}
\end{equation}
is a finite set and $\mathbb{N}_{-1}=\mathbb{N}_0\cup \{-1\}$. Furthermore, $x_1, x_2$ and $x_3$ are complex variables, the coefficients of $P, Q$ and $R$ are complex parameters, and $\zeta^3 = 1, \zeta \neq 1$. For the rest of this paper, by the integrability of system \eqref{3D} (or its subfamily) we mean existence of an analytic first integral of the form \eqref{int_Phi} for \eqref{3D} (or for the subfamily). We note that, by Theorem \ref{th_P}, if there is a formal transformation which linearizes system \eqref{3D}, then there is also a convergent transformation which linearizes the system. Then the product of the coordinate functions of the linearilizing transformation is an analytic first integral of the form \eqref{int_Phi} (with $n=3$). That is, formal or analytic linearizability of system \eqref{3D} yields integrability of the system.

In the present paper, we first give a criterion for convergence of the normal form of system \eqref{3D} and study the relation of the normal form and the integrability. Then, we propose an efficient algorithm to compute the necessary conditions of integrability of system \eqref{3D} and apply it to study the integrability and linearizability of a quadratic subfamily of systems \eqref{3D}.

\section{Normal forms and integrability of system \eqref{3D}}

It is easy to see that the normal form of system (\ref{3D}) can be written as
\begin{equation}\label{normalForm}
    \begin{aligned}
        \dot{y_1} =& y_1 \left ( 1 + \sum_{k \in \mathbb{N}} Y_1^{(k+1,k,k)} y_1^k y_2^k y_3^k \right ) = y_1 + y_1 Y_1(y_1 y_2 y_3), \\
        \dot{y_2} =& y_2 \left ( \zeta + \sum_{k \in \mathbb{N}} Y_2^{(k,k+1,k)} y_1^k y_2^k y_3^k \right )= \zeta y_2 + y_2 Y_2(y_1 y_2 y_3), \\
        \dot{y_3} =& y_3 \left ( \zeta^2 + \sum_{k \in \mathbb{N}} Y_2^{(k,k,k+1)} y_1^k y_2^k y_3^k \right ) = \zeta^2 y_3 + y_3 Y_3(y_1 y_2 y_3).
    \end{aligned}
\end{equation}

The following lemma shows that small divisors are not encountered in the normalization of system \eqref{3D}.

\begin{lemma}\label{lem1}
Let $\kappa=(\kappa_1,\kappa_2,\kappa_3) = (1, \zeta, \zeta^2)$ and $(\alpha, \kappa)$ denote the standard scalar product of vectors $\alpha$ and $\kappa$. Then
\begin{equation*}
 | (\alpha, \kappa) - \kappa_m | \geq 1,
 \end{equation*}
 for all $\alpha \in \mathbb{N}_{0}^{3}$ and $m \in \{ 1, 2, 3 \}$ such that $(\alpha, \kappa) - \kappa_m \neq 0$.
\end{lemma}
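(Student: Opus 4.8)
The plan is to observe that $(\alpha,\kappa)-\kappa_m$ is always an element of the ring of Eisenstein integers $\mathbb{Z}[\zeta]$, and then to use the elementary fact that a nonzero element of this ring has modulus at least $1$.

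First I would use the relation $1+\zeta+\zeta^2=0$, which holds because $\zeta$ is a primitive cubic root of unity, to substitute $\zeta^2=-1-\zeta$. Then for $\alpha=(\alpha_1,\alpha_2,\alpha_3)\in\mathbb{N}_0^3$ we have $(\alpha,\kappa)=\alpha_1+\alpha_2\zeta+\alpha_3\zeta^2=(\alpha_1-\alpha_3)+(\alpha_2-\alpha_3)\zeta$, a $\mathbb{Z}$-linear combination of $1$ and $\zeta$. Subtracting $\kappa_m\in\{1,\zeta,\zeta^2\}$ (rewriting $\zeta^2$ again in the case $m=3$) keeps the coefficients integral, so in each of the three cases $(\alpha,\kappa)-\kappa_m=a+b\zeta$ with $a,b\in\mathbb{Z}$ that can be written out explicitly from $\alpha$ and $m$.

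Next I would compute the squared modulus. Since $\zeta+\bar\zeta=2\cos(2\pi/3)=-1$ and $|\zeta|=1$, one obtains $|a+b\zeta|^2=(a+b\zeta)(a+b\bar\zeta)=a^2-ab+b^2$. This is a non-negative integer, and the identity $a^2-ab+b^2=\bigl(a-\tfrac{b}{2}\bigr)^2+\tfrac34 b^2$ shows it vanishes only for $a=b=0$. Hence whenever $(\alpha,\kappa)-\kappa_m\neq 0$ the quantity $a^2-ab+b^2$ is a positive integer, so it is $\ge 1$, and therefore $|(\alpha,\kappa)-\kappa_m|\ge 1$, as claimed.

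The argument is entirely elementary and I do not anticipate any genuine obstacle; the only point needing a little care is the bookkeeping across the three cases $m=1,2,3$ to confirm that the coordinates $a,b$ are integers, which is immediate from $\zeta^2=-1-\zeta$.
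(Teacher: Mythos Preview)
Your argument is correct and is actually cleaner than the paper's. The paper fixes $m=1$, writes $\zeta=-\tfrac12+\tfrac{\sqrt3}{2}i$, separates $(\alpha,\kappa)-\kappa_1$ into its real and imaginary parts, and then runs through several subcases (imaginary part zero; real part zero; $|\alpha_2-\alpha_3|=1$; $|\alpha_2-\alpha_3|\ge 2$) to check the bound in each. Your approach sidesteps all of this by recognising at once that $(\alpha,\kappa)-\kappa_m\in\mathbb{Z}[\zeta]$ and that the Eisenstein norm $|a+b\zeta|^2=a^2-ab+b^2$ is a non-negative integer vanishing only at the origin; thus any nonzero value has modulus $\ge 1$. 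This handles all three values of $m$ uniformly, requires no case distinctions, and in fact works for $\alpha\in\mathbb{Z}^3$ rather than just $\mathbb{N}_0^3$. The paper's route has the minor advantage of yielding slightly sharper bounds in some subcases (e.g.\ $\sqrt3$ rather than $1$), but those refinements are not used anywhere.
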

\begin{proof}
We will prove this lemma only for $m = 1,$ since the other two cases are similar. We have $(\alpha, \kappa) - \kappa_m \neq 0$ for $\alpha \neq (k+1, k, k)$ for $m=1$. Suppose $\alpha = (\alpha_1, \alpha_2, \alpha_3) \neq (k+1, k, k), k \in \mathbb{N}_{0}.$ Then  $(\alpha, \kappa) - \kappa_1 = \alpha_1 - 1 + \zeta \alpha_2 + \zeta^2 \alpha_3.$ Since $\zeta^2 + \zeta + 1 = 0,$ we can write $\zeta$ as $\zeta = -\frac{1}{2} + \frac{\sqrt{3}}{2} i$ and $\zeta^2 = -\frac{1}{2} - \frac{\sqrt{3}}{2} i$. Then,
\begin{equation*}
(\alpha, \kappa) - \kappa_1 = \alpha_1 - 1 + \zeta \alpha_2 + \zeta^2 \alpha_3 = \alpha_1 - 1 - \frac{1}{2}(\alpha_2 + \alpha_3) + \frac{\sqrt{3}}{2}(\alpha_2 - \alpha_3) i,
\end{equation*}
so Re$((\alpha, \kappa) - \kappa_1) = \alpha_1 - 1 - \frac{1}{2}(\alpha_2 + \alpha_3)$ and Im$((\alpha, \kappa) - k_1) = \frac{\sqrt{3}}{2}(\alpha_2 - \alpha_3).$

Suppose first that the imaginary part is equal to zero. This means that $\alpha_2 = \alpha_3$. Then $| (\alpha, \kappa) - \kappa_m | = |\alpha_1 - 1 - \alpha_2|.$ Since $\alpha_1 - 1 - \alpha_2$ is an integer and $\alpha_1 \neq \alpha_2 + 1,$ we have that $| (\alpha, \kappa) - \kappa_m | \geq 1.$

Let us now consider the case when the real part is equal to zero. This means that $| (\alpha, \kappa) - \kappa_1 | = \frac{\sqrt{3}}{2}|\alpha_2 - \alpha_3|.$ Since the real part is equal to zero, it holds that $\alpha_1 = 1 + \frac{1}{2}(\alpha_2 + \alpha_3) \in \mathbb{N}.$ This implies that $\frac{\sqrt{3}}{2}|\alpha_2 - \alpha_3| \geq \sqrt{3} > 1.$

Finally, consider the case when both the real and imaginary part are different from zero. If $\alpha_2 - \alpha_3 = 1,$ then the real part is $\alpha_1 - 1 - \alpha_3 - \frac{1}{2} = \alpha_1 - \alpha_3 - \frac{3}{2}.$ In this case we have
\begin{equation*}
 | (\alpha, \kappa) - \kappa_1 | =
 \sqrt{ (\alpha_1 - \alpha_3 - \frac{3}{2})^2 + \frac{3}{4}} \geq
        1.
\end{equation*}
We obtain the same result if $\alpha_2 - \alpha_ 3 = -1.$

If $|\alpha_2 - \alpha_3| \geq 2,$ then
\begin{equation*}
| (\alpha, \kappa) - \kappa_1 |  >\sqrt{3} > 1.
\end{equation*}
This concludes the proof.
\end{proof}

The next theorem gives a condition which ensures the convergence of the distinguished normal form.
\begin{theorem}
If $ Y_1  + Y_2 + Y_3 \equiv 0 $ then the distinguished normal form (\ref{normalForm}) of system \eqref{3D} is convergent.
\end{theorem}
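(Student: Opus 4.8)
The plan is to leverage the single algebraic consequence of the hypothesis — that $u=y_1y_2y_3$ becomes a conserved quantity of the normal form — and then to push this through the homological recursion, where all small–divisor obstructions have already been removed by Lemma~\ref{lem1}.

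First I would record the elementary but decisive computation. Along \eqref{normalForm},
\[
\dot u=u\bigl((1+Y_1(u))+(\zeta+Y_2(u))+(\zeta^2+Y_3(u))\bigr)=u\bigl((1+\zeta+\zeta^2)+(Y_1+Y_2+Y_3)(u)\bigr),
\]
so $1+\zeta+\zeta^2=0$ together with $Y_1+Y_2+Y_3\equiv0$ yields $\dot u\equiv0$. The same bookkeeping shows that the divergence of the right–hand side of \eqref{normalForm} equals $\bigl(u\,(Y_1+Y_2+Y_3)(u)\bigr)'\equiv0$, so the normal form is also divergence free, and the resonant part $G(y)=(y_1Y_1(u),y_2Y_2(u),y_3Y_3(u))$ is tangent to the fibration $\{u=\mathrm{const}\}$ (indeed $\nabla u\cdot G=u\,(Y_1+Y_2+Y_3)(u)\equiv0$). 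Pulling these facts back along the (a priori only formal) distinguished transformation $x=y+h(y)$, system \eqref{3D} acquires a formal first integral $\Phi=x_1x_2x_3+\text{h.o.t.}$ of the shape \eqref{int_Phi} together with a formal Jacobi last multiplier; thus the normal form is integrable, the two independent ingredients being one first integral and one multiplier.

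Next I would set up the normalization recursion in the familiar way: writing $Lh_j+G_j=R_j$ with $R_j=[X(y+h)]_j-[Dh\,G]_j$, where $L$ multiplies $y^\alpha e_m$ by $(\alpha,\kappa)-\kappa_m$, Lemma~\ref{lem1} gives $\|L^{-1}\|\le1$ on the non–resonant monomials. A structural point worth isolating is that, because $G$ is carried by resonant monomials and multiplication by $u^k$ does not change $(\alpha,\kappa)-\kappa_m$ (the exponent vector $(1,1,1)$ of $u$ satisfies $1+\zeta+\zeta^2=0$), the correction $[Dh\,G]_j$ is supported entirely on non–resonant monomials; hence $G_j=\Pi_{\mathrm{res}}[X(y+h)]_j$ and $h_j=L^{-1}\bigl(\Pi_{\mathrm{non}}[X(y+h)]_j-[Dh\,G]_j\bigr)$, so the normal–form terms are generated purely by $X(y+h)$.

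The hard part is the convergence estimate, and this is where the hypothesis must really be used. A crude majorant bound on $R_j$ only produces an inequality of the form $\hat h\preceq\widehat X(s+\hat h)(1+C\hat h')$ (coefficientwise) for the majorant series $\hat h$, whose solution has radius of convergence zero; this divergence is genuine, being what occurs for a generic member of the family. The remedy is to refuse to estimate the feedback term $Dh\,G$ crudely and instead to exploit that $G$ preserves $u$: one normalizes while respecting the geometry of the level sets of the formal first integral $\Phi$. Concretely I see two routes to finish. The first is to invoke a convergence theorem in the spirit of Zhang's result but adapted to the partially integrable case — one first integral plus one Jacobi multiplier playing the role of his $n-1$ independent first integrals — which gives directly a convergent transformation to the distinguished normal form \eqref{normalForm}. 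The second, more self–contained, is to run the majorant argument with a weighted norm that separates the ``$u$-degree'' $\min(\alpha_1,\alpha_2,\alpha_3)$ from the total degree, so that the identity $\nabla u\cdot G\equiv0$ forces the $Dh\,G$ contribution to gain weight at each step and the estimate closes, giving $h$ (and hence $G$ and the series $Y_i$) a positive radius of convergence. Either way the distinguished normal form \eqref{normalForm} is reached by a convergent transformation. I expect this last step — converting the integrability of the normal form into genuine control of the resonant feedback in the recursion — to be the only real obstacle; everything else is the one–line identity above or the small–divisor–free bookkeeping guaranteed by Lemma~\ref{lem1}.
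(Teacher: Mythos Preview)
Your preliminary observations are correct and pleasant: $u=y_1y_2y_3$ is conserved by \eqref{normalForm}, the normal form is divergence free, and $Dh\,G$ is supported on non-resonant monomials so that $G_j=\Pi_{\mathrm{res}}[X(y+h)]_j$. But the proof stops exactly at the point that matters. Neither of your two proposed finishes is actually available. Route~1 appeals to a ``Zhang-type theorem adapted to the partially integrable case'' (one first integral plus one Jacobi multiplier); no such convergence theorem is stated or proved in the paper or its references, and Zhang's result as cited requires $n-1$ independent first integrals, which here would mean two, not one plus a multiplier. Route~2 is only a heuristic: saying that a weighted norm separating $u$-degree from total degree will make the $Dh\,G$ feedback ``gain weight'' is not an estimate, and in fact $Dh\,G$ does \emph{not} raise $u$-degree --- multiplication by $u^k$ raises it, but differentiation of $h$ can lower it, so the gain you need is not automatic. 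You yourself flag this step as ``the only real obstacle''; as written it is an unclosed gap.

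The paper bypasses all of this by citing a ready-made convergence criterion (Bibikov \cite[Theorem~3.2]{Bib}, also \cite[Theorem~2.3.13]{RS}): under Lemma~\ref{lem1}, the distinguished normalization converges provided there is a constant $d$ with
\[
\Bigl|\sum_{j=1}^3 \beta_j\,Y_j^{(\alpha-\beta+e_j)}\Bigr|\ \le\ d\,|(\beta,\kappa)|\sum_{j=1}^3 \bigl|Y_j^{(\alpha-\beta+e_j)}\bigr|
\]
for all admissible resonant pairs $(\alpha,\beta)$. Because every resonant $\beta$ is of the form $(m,m,m)+e_j$, the hypothesis $Y_1+Y_2+Y_3\equiv0$ collapses the left side to a single $|Y_j^{(\cdot)}|$, and since $|(\beta,\kappa)|=|\zeta^{j-1}|=1$ the inequality holds with $d=1$. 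That is the whole proof. The moral is that the algebraic identity you found is precisely what is needed to verify Bibikov's Condition~A; you should invoke that criterion rather than attempt a fresh majorant argument.
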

\begin{proof}
In view of Lemma \ref{lem1}, by \cite[Theorem 3.2]{Bib} (or by \cite[Theorem 2.3.13]{RS}) the distinguished normal form of \eqref{3D} is convergent if there exists a positive constant $d$ such that for all $\alpha$ and $\beta$ in $\mathbb{N}_{0}^{3}$ for which $2 \leq |\beta| \leq |\alpha|-1, \alpha - \beta + e_m \in \mathbb{N}_{0}^{3}$ for all $m \in \{ 1, 2, 3 \},$ and $$(\alpha - \beta, \kappa) = 0,$$ it holds that
\begin{equation*}
\left | \sum_{j=1}^{3} \beta_{j} Y_{j}^{(\alpha - \beta + e_j)}  \right | \leq d |(\beta, \kappa)| \sum_{j=1}^{3} \left | Y_{j}^{(\alpha - \beta + e_j)} \right |.
\end{equation*}

In our case $\alpha = (k+1, k, k), (k, k+1, k), (k, k, k+1),$ for $j = 1, 2, 3$, respectively. But then $\beta = (m+1,m,m), (m,m+1,m), (m,m,m+1)$ for $j = 1, 2, 3$, respectively, where $k$ and $m$ are natural numbers and $k \geq m.$ Let $\alpha  = (k+1,k,k).$ Then $\beta = (m+1,m,m)$ and we have

\begin{equation*}
\begin{aligned}
&\Big | \sum_{j=1}^{3} \beta_{j} Y_{j}^{(\alpha - \beta + e_j)}  \Big| = \left | (m+1) Y_1^{(k-m+1,k-m,k-m)} + m  Y_2^{(k-m,k-m+1,k-m)} +  m Y_3^{(k-m,k-m,k-m+1)} \right | \\
= &\left | Y_1^{(k-m+1,k-m,k-m)} \right | \leq \sum_{j=1}^{3} \left | Y_{j}^{(\alpha - \beta + e_j)} \right | = | (\beta, \kappa) | \sum_{j=1}^{3} \left | Y_{j}^{(\alpha - \beta + e_j)} \right |,
\end{aligned}
\end{equation*}
because $(\beta, \kappa) = (m+1) + m \zeta + m \zeta^2 = 1.$ Similarly we obtain the same result for $\alpha = (k,k+1,k), \beta = (m,m+1,m)$ for which we have $|(\beta,\kappa)| = | \zeta | = 1,$ and $\alpha = (k,k,k+1), \beta = (m,m,m+1),$ for which  we have $|(\beta,\kappa)| = | \zeta^2 | = 1.$
\end{proof}

The next statement presents a generalization of some results of \cite{Bib,RS} obtained for the two-dimensional case to the case of three-dimensional system \eqref{3D}.
\begin{theorem}\label{th_2}
For system (\ref{3D}), we have the following statements.

1)  If system (\ref{3D}) has a formal first integral $\Phi(x)$ of the form \eqref{int_Phi} then
$Y_1 + Y_2 + Y_3 \equiv 0$ and the distinguished normal form is convergent.

2) If $Y_1 + Y_2 + Y_3 \equiv 0,$ then (\ref{3D}) has a  convergent first integral of the form \eqref{int_Phi}.
\end{theorem}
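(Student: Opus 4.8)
\medskip
\noindent\textbf{Proof proposal.} For part~1) the plan is to transport a formal first integral of \eqref{3D} to the normal form \eqref{normalForm} and read off the obstruction; for part~2) it is to pull back the evident first integral $y_1y_2y_3$ of \eqref{normalForm} along the distinguished normalizing transformation, whose convergence is supplied by the preceding theorem. Concretely, for part~1) fix any formal normalizing transformation $x = y + h(y)$ with $h = O(|y|^2)$ taking \eqref{3D} to \eqref{normalForm}. Since such a transformation maps trajectories to trajectories, $\widetilde\Phi(y) := \Phi(y + h(y))$ is a formal first integral of \eqref{normalForm}, and because $h$ starts with quadratic terms, $\widetilde\Phi(y) = y_1y_2y_3 + (\text{terms of degree} \ge 4)$. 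Put $u = y_1y_2y_3$ and let $\mathcal X$ denote the vector field \eqref{normalForm}. The product rule gives, for any monomial $y^\alpha$,
\[
\mathcal X(y^\alpha) = y^\alpha\big((\alpha,\kappa) + \alpha_1 Y_1(u) + \alpha_2 Y_2(u) + \alpha_3 Y_3(u)\big);
\]
in particular, taking $\alpha = (1,1,1)$ and using $1 + \zeta + \zeta^2 = 0$, one gets $\mathcal X(u) = u\,(Y_1+Y_2+Y_3)(u)$. Since each $Y_k(u)$ has zero $(\,\cdot\,,\kappa)$-value, the operator $\mathcal X$ preserves the splitting of the ring of formal power series according to the value $(\alpha,\kappa)$ of the monomials. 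Hence the part $\widetilde\Phi^{(0)}$ of $\widetilde\Phi$ consisting of the monomials with $(\alpha,\kappa) = 0$ is itself a formal first integral of \eqref{normalForm}; since $(\alpha,\kappa) = 0$ with $\alpha \in \mathbb{N}_0^3$ forces $\alpha_1 = \alpha_2 = \alpha_3$ (as in the proof of Lemma~\ref{lem1}), we obtain $\widetilde\Phi^{(0)} = F(u)$ with $F(u) = u + O(u^2)$. Then $0 = \mathcal X(F(u)) = F'(u)\,u\,(Y_1+Y_2+Y_3)(u)$, and $F'(u) = 1 + O(u) \not\equiv 0$ forces $Y_1 + Y_2 + Y_3 \equiv 0$; convergence of the distinguished normal form now follows from the preceding theorem.

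For part~2), assume $Y_1 + Y_2 + Y_3 \equiv 0$. By the preceding theorem, together with the convergence results it rests on (\cite[Theorem 3.2]{Bib}, \cite[Theorem 2.3.13]{RS}), the distinguished normalizing transformation $x = y + h(y)$ converges in a neighbourhood of the origin; let $y = \psi(x) = (\psi_1,\psi_2,\psi_3)(x)$, with $\psi_i(x) = x_i + O(|x|^2)$, be its analytic inverse. In the $y$-coordinates the system takes the form \eqref{normalForm}, so $\tfrac{d}{dt}(y_1y_2y_3) = y_1y_2y_3\,(Y_1+Y_2+Y_3)(y_1y_2y_3) \equiv 0$, i.e.\ $y_1y_2y_3$ is an analytic first integral there. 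Therefore $\Phi(x) := \psi_1(x)\psi_2(x)\psi_3(x)$ is an analytic first integral of \eqref{3D} and has the form \eqref{int_Phi}.

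Everything above is essentially formal bookkeeping; I expect the only delicate point to be the opening assertion of part~2), namely that the cited theorems yield convergence of the normalizing \emph{transformation} and not merely of the normal form. Should only the latter be on hand, the alternative is to construct $\Phi$ directly as a formal series in $x$ solving $\dot\Phi = 0$ --- the resonant obstructions being exactly the coefficients of $Y_1 + Y_2 + Y_3$, which vanish by hypothesis --- and then establish its convergence by a majorant estimate, Lemma~\ref{lem1} guaranteeing that no small divisors arise.
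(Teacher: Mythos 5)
Your proposal is correct and follows essentially the same route as the paper: transport the formal integral $\Phi$ to the normal form via the normalizing substitution, reduce it to a series in $u=y_1y_2y_3$, and read off $uF'(u)(Y_1+Y_2+Y_3)(u)\equiv 0$. The one place you diverge is the reduction step: the paper argues by induction that every non-resonant coefficient $F^{(\alpha)}$ with $(\alpha,\kappa)\neq 0$ actually vanishes, so that the whole transported integral equals $f(u)$, whereas you only project onto the $(\alpha,\kappa)=0$ graded piece and observe that this piece is itself a first integral because $\mathcal X$ preserves the grading; this is a valid and slightly more economical variant, since it sidesteps the inductive vanishing claim entirely. For part 2) the paper simply declares the statement obvious, and your spelled-out argument (pull back $y_1y_2y_3$ along the convergent distinguished normalization guaranteed by the preceding theorem) is exactly the intended one; your closing caveat is unnecessary, as the cited results of Bibikov and Romanovski--Shafer do give convergence of the normalizing transformation itself, not merely of the normal form.
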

\begin{proof}
The second statement is obvious, so we prove only the first one. Let $H(y) = y + h(y)$ be the normalizing transformation.
Then
\begin{equation} \label{F}
F = \Phi \circ H  = \sum \limits_{(\alpha_1,\alpha_2,\alpha_3)} F^{(\alpha_1,\alpha_2,\alpha_3)} y_1^{\alpha_1} y_2^{\alpha_2} y_3^{\alpha_3} = y_1 y_2 y_3 + \cdots
\end{equation}
is a formal first integral of  the normal form yielding
\begin{equation}\label{normalFormEQ}
\begin{aligned}
 &y_1 \frac{\partial F}{\partial y_1}(y_1,y_2,y_3) + \zeta y_2 \frac{\partial F}{\partial y_2}(y_1,y_2,y_3) + \zeta^2 y_3 \frac{\partial F}{\partial y_3}(y_1,y_2,y_3) \\
 = &- y_1 \frac{\partial F}{\partial y_1}(y_1,y_2,y_3)Y_1(y_1y_2y_3) - y_2 \frac{\partial F}{\partial y_2}(y_1,y_2,y_3)Y_2(y_1y_2y_3) - y_3 \frac{\partial F}{\partial y_3}(y_1,y_2,y_3)Y_3(y_1y_2y_3).
\end{aligned}
\end{equation}

From \eqref{F} and \eqref{normalFormEQ} we have
\begin{equation}\label{RHS}
\begin{aligned}
 &\sum \limits_{(\alpha_1,\alpha_2,\alpha_3)} (\alpha_1 + \zeta \alpha_2 + \zeta^2 \alpha_3) F^{(\alpha_1,\alpha_2,\alpha_3)} y_1^{\alpha_1} y_2^{\alpha_2} y_3^{\alpha_3} \\
 = &- \left [ \sum \limits_{(\alpha_1,\alpha_2,\alpha_3)} \alpha_1 F^{(\alpha_1,\alpha_2,\alpha_3)} y_1^{\alpha_1} y_2^{\alpha_2} y_3^{\alpha_3} \right ] \left [ \sum \limits_{k=1}^{\infty} Y_1^{(k+1,k,k)} y_1^k y_2^k y_3^k \right ] \\
 = &- \left [ \sum \limits_{(\alpha_1,\alpha_2,\alpha_3)} \alpha_2 F^{(\alpha_1,\alpha_2,\alpha_3)} y_1^{\alpha_1} y_2^{\alpha_2} y_3^{\alpha_3} \right ] \left [ \sum \limits_{k=1}^{\infty} Y_2^{(k,k+1,k)} y_1^k y_2^k y_3^k \right ] \\
  = &- \left [ \sum \limits_{(\alpha_1,\alpha_2,\alpha_3)} \alpha_3 F^{(\alpha_1,\alpha_2,\alpha_3)} y_1^{\alpha_1} y_2^{\alpha_2} y_3^{\alpha_3} \right ] \left [ \sum \limits_{k=1}^{\infty} Y_3^{(k,k,k+1)} y_1^k y_2^k y_3^k \right ].
\end{aligned}
\end{equation}
By induction it is easy to see that if $\alpha_1 + \zeta \alpha_2 + \zeta^2 \alpha_3 \neq 0$, then the coefficient $F^{(\alpha_1, \alpha_2, \alpha_3)}$ in $F(y)$ is equal to zero. Thus, $F(y_1,y_2,y_3) = f(y_1y_2y_3).$

But, for  $F(y_1,y_2,y_3) = f(y_1y_2y_3)$, we have
\begin{equation}
y_1 \frac{\partial F}{\partial y_1} = y_1 y_2 y_3 f ' (y_1 y_2 y_3), \quad y_2 \frac{\partial F}{\partial y_2} = y_1 y_2 y_3 f ' (y_1 y_2 y_3) \quad \text{and} \quad y_3 \frac{\partial F}{\partial y_3} = y_1 y_2 y_3 f ' (y_1 y_2 y_3),
\end{equation}
so denoting $y_1 y_2 y_3 = w,$ from  (\ref{normalFormEQ})
we obtain
    \begin{equation*}
        0 \equiv -wf'(w)Y_1(w) - wf'(w)Y_2(w) - wf'(w)Y_3(w) = -wf'(w)(Y_1(w) + Y_2(w) + Y_3(w))
    \end{equation*}
    yielding $Y_1(w) + Y_2(w) + Y_3(w) \equiv 0.$

\end{proof}

\section{Integrability quantities}

Let
$$
\mathcal{X}=  P(x_1,x_2,x_3) \frac{\partial}{\partial x_1} + Q(x_1,x_2,x_3) \frac{\partial}{\partial x_2}+ R(x_1,x_2,x_3)\frac{\partial}{\partial x_3}
$$
be the vector field of system \eqref{3D}.
It is easy to see  that it is  possible to find a (formal) power series $\Psi$ of the form
\begin{equation}\label{FirstIntegral}
    \Psi(x_1,x_2,x_3) = v_{000} x_1 x_2 x_3 + \sum_{i+j+k \geq 4} v_{i-1,j-1,k-1} x_1^i x_2^j x_3^k,
\end{equation}
such that $v_{000}=1$,  $v_{k,k,k}=0$ for $k>0$ and
\begin{equation}\label{FirstIntegral2}
      \mathcal{X}(\Psi) =
       \sum_{k \geq 1}g_{k,k,k}x_1^{k+1} x_2^{k+1} x_3^{k+1}.
\end{equation}

The coefficients $g_{k,k,k}$ are called the {\it integrability quantities.} Obviously, they are polynomial in the parameters of system \eqref{3D}. From \eqref{FirstIntegral} we see that the integrability quantities $g_{k,k,k}$ and the coefficients $v_{k_1,k_2,k_3}$ of \eqref{FirstIntegral2} can be computed recursively by the formulae
\begin{equation}\label{rec v_k}
    \begin{aligned}
        &v_{k_1,k_2,k_3} = -\frac{1}{k_1+k_2\zeta+k_3\zeta^2} \biggl( \sum_{(p,q,r) \in S}(k_1+1-p)a_{pqr}v_{k_1-p,k_2-q,k_3-r} \\
        &+ \zeta \sum_{(p,q,r) \in S}(k_2+1-p)b_{rpq}v_{k_1-r,k_2-p,k_3-q}
        + \zeta^2 \sum_{(p,q,r) \in S}(k_3+1-p)c_{qrp}v_{k_1-q,k_2-r,k_3-p} \biggr)
    \end{aligned}
\end{equation}
and
\begin{equation}\label{g_k rec}
    \begin{aligned}
        &g_{kkk} = \sum_{(p,q,r) \in S}(k+1-p)a_{pqr}v_{k-p,k-q,k-r} + \zeta \sum_{(p,q,r) \in S}(k+1-p)b_{rpq}v_{k-r,k-p,k-q} + \\
        & \quad \zeta^2 \sum_{(p,q,r) \in S}(k+1-p)c_{qrp}v_{k-q,k-r,k-p}.
    \end{aligned}
\end{equation}

The following algorithm is a direct implementation of the formulae \eqref{rec v_k}
and \eqref{g_k rec}.

Calculating any of the polynomials $v_{k_1,k_2,k_3}$ or $g_{kkk}$ is a difficult computational problem because the number of terms in these polynomials grows so fast. To make a better understanding of the algebraic structure and properties of the integrability quantities $g_{k,k,k}$ and the coefficients $v_{k_1,k_2,k_3}$, we introduce the following notations.  It is also helpful in  developing  another algorithm for computing the integrability quantities $g_{k,k,k}$.

We order the set $S$ given by \eqref{setS}
and write the ordered set as $S = \{ (p_1,q_1,r_1), \dots, (p_l, q_l, r_l) \}.$ We then order the parameters accordingly as $$(a_{p_1q_1r_1}, \dots, a_{p_lq_lr_l}, b_{r_1p_1l_1}, \dots, b_{r_l,p_l,q_l}, c_{q_1r_1p_1}, \dots, c_{q_lr_lp_l}),$$ so that any monomial in
the parameters of system \eqref{3D} has the form
$$a_{p_1q_1r_1}^{\nu_1} \cdots a_{p_lq_lr_l}^{\nu_l} b_{r_1p_1q_1}^{\nu_{l+1}} \cdots b_{r_lp_lq_l}^{\nu_{2l}} c_{q_1r_1p_1}^{\nu_{2l+1}} \cdots c_{q_lr_lp_l}^{\nu_{3l}},$$ for some $\nu = (\nu_1, \dots, \nu_{3l}) \in \mathbb{N}_{0}^{3l}.$ To simplify the notation
for $\nu \in \mathbb{N}_{0}^{3l}$, we denote  the above monomial by $[\nu]$,
\begin{equation*}
    [\nu] \stackrel{\text{def}}{=} \quad  a_{p_1q_1r_1}^{\nu_1} \cdots a_{p_lq_lr_l}^{\nu_l} b_{r_1p_1q_1}^{\nu_{l+1}} \cdots b_{r_lp_lq_l}^{\nu_{2l}} c_{q_1r_1p_1}^{\nu_{2l+1}} \cdots c_{q_lr_lp_l}^{\nu_{3l}}.
\end{equation*}
We will write just $\mathbb{C}[a,b,c]$ instead of $\mathbb{C}[a_{p_1q_1r_1}, \dots, a_{p_lq_lr_l},b_{r_1p_1q_1},\dots,b_{r_lp_lq_l},c_{q_1r_1p_1},\dots,c_{q_lr_lp_l}],$ and for $f \in \mathbb{C}[a,b,c],$ we  write it as  $ f = \sum_{\nu\in \mathrm{Supp}(f)}f^{(\nu)}[\nu],$ where Supp$(f)$ denotes those $\nu \in \mathbb{N}_{0}^{3l}$ such that the coefficient of $[\nu]$ in the polynomial $f$ is nonzero.

Let $L : \mathbb{N}_{0}^{3l} \rightarrow \mathbb{Z}^{3}$
be the map defined by
\begin{equation*}
    \begin{aligned}
        L(\nu) &= (L_1(\nu), L_2(\nu), L_3(\nu)) = \nu_1(p_1,q_1,r_1) + \cdots + \nu_l(p_l,q_l,r_l) +\\
        &\nu_{l+1}(r_1,p_1,q_1) + \cdots + \nu_{2l}(r_l,p_l,q_l) + \nu_{2l+1}(q_1,r_1,p_1) + \cdots + \nu_{3l}(q_l,r_l,p_l).
    \end{aligned}
\end{equation*}
\begin{definition}
    For $(i,j,k) \in \mathbb{N}_{-1} \times \mathbb{N}_{-1} \times \mathbb{N}_{-1},$ a polynomial $f = \sum_{\nu \in \mathrm{Supp}(f)}f^{(\nu)}[\nu]$ in $\mathbb{C}[a,b,c]$ is an $(i,j,k)$-polynomial if for every $\nu \in \mathrm{Supp}(f), L(\nu)=(i,j,k).$
\end{definition}

The next theorem gives some properties of the coefficients $v_{i,j,k}$ of series \eqref{FirstIntegral} and of the integrability quantities.
\begin{theorem} \label{thmKKK}
Let family (\ref{3D}) be given. There exists a formal series $\varPsi(x_1,x_2,x_3)$ of the form (\ref{FirstIntegral}) and polynomials $g_{111}, g_{222}, \dots$ $\in \mathbb{C}[a,b,c]$ such that
\begin{enumerate}
\item $ \mathcal{X} \varPsi  = g_{111}(x_1x_2x_3)^2 + g_{222}(x_1x_2x_3)^3 + g_{333}(x_1x_2x_3)^4 + \cdots $,
\item for every triple  $(i,j,k) \in \mathbb{N}_{-1}^{3}, i+j+k \geq 0, v_{ijk} \in \mathbb{C}[a,b,c]$, and $v_{ijk}$ is an $(i,j,k)$-polynomial,
\item for every $k \geq 1, v_{kkk} = 0,$ and
\item for every $k \geq 1, g_{kkk} \in \mathbb{C}[a,b,c],$ and $g_{kkk}$ is a $(k,k,k)$-polynomial.
\end{enumerate}
\end{theorem}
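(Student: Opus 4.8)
The plan is to prove all four items together by a single induction that tracks both the $L$-grading and the recursive structure of the defining equations $\mathcal{X}\varPsi = \sum_{k\ge 1} g_{kkk}(x_1x_2x_3)^{k+1}$. First I would set up the induction on $N = i+j+k$ for the coefficients $v_{ijk}$, the base case being $v_{000}=1$, which is trivially the $(0,0,0)$-polynomial $1$ (the empty monomial $[\mathbf{0}]$ with $L(\mathbf{0})=(0,0,0)$). The recursion \eqref{rec v_k} expresses $v_{k_1,k_2,k_3}$ (when $k_1+\zeta k_2 +\zeta^2 k_3 \neq 0$) as a $\mathbb{C}$-linear combination of products of one parameter ($a_{pqr}$, $b_{rpq}$, or $c_{qrp}$) with an earlier coefficient $v_{k_1-p,k_2-q,k_3-r}$ (resp.\ with shifted indices), and each such term has total index strictly smaller, so the induction hypothesis applies. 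The key bookkeeping observation is that multiplying by $a_{pqr}$ adds $(p,q,r)$ to the $L$-value while the index shifts from $(k_1-p,k_2-q,k_3-r)$ to $(k_1,k_2,k_3)$, i.e.\ the $L$-value of the new monomial equals $(k_1-p,k_2-q,k_3-r)+(p,q,r)=(k_1,k_2,k_3)$; similarly for the $b$ and $c$ contributions one checks that the cyclic permutation of $(p,q,r)$ matches the cyclic permutation of the index shift, so in every case the resulting monomials are $(k_1,k_2,k_3)$-monomials. This gives items 2 and — applied with $(k_1,k_2,k_3)=(k,k,k)$ where the prescription is to set $v_{kkk}=0$ rather than divide by the vanishing $k(1+\zeta+\zeta^2)=0$ — shows item 3 is consistent, while \eqref{g_k rec} then exhibits $g_{kkk}$ as the same kind of linear combination of products (one parameter)$\times v_{(\cdot)}$, each summand being a $(k,k,k)$-monomial by the same arithmetic, giving item 4; item 1 is just the definition of the $g_{kkk}$ via \eqref{FirstIntegral2}.

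The one genuinely delicate point, which I would treat carefully, is the well-definedness of the recursion: formula \eqref{rec v_k} divides by $k_1 + k_2\zeta + k_3\zeta^2$, which vanishes precisely when $k_1=k_2=k_3$ (by the same computation as in Lemma~\ref{lem1}: the real and imaginary parts force $k_2=k_3$ and then $k_1 = k_2$). So I must argue that whenever the algorithm would need $v_{kkk}$ for $k\ge 1$, the contribution of that term is killed — this is exactly the content of the factor $(k+1-p)$, $(k+1-p)$, etc. Concretely, the coefficient $v_{kkk}$ (which we are free to set to $0$) only enters \eqref{rec v_k} and \eqref{g_k rec} multiplied by a factor of the form $(\text{new first index}+1-p)$ where ``new first index'' is $k_1$ in one group, and the point is that along the diagonal this factor... actually the cleaner statement is: setting $v_{kkk}=0$ is a \emph{choice} we impose, and one must verify the resulting $\varPsi$ still satisfies $\mathcal{X}\varPsi \in \mathbb{C}[(x_1x_2x_3)]\cdot(x_1x_2x_3)$, which is where the identity $\mathcal{X}(x_1x_2x_3)$ being itself a multiple of $x_1x_2x_3$ plus higher-order resonant terms is used — more precisely, when the coefficient extraction for the monomial $x_1^{k+1}x_2^{k+1}x_3^{k+1}$ in $\mathcal{X}\varPsi$ is performed, the term that would have determined $v_{kkk}$ instead becomes the definition of $g_{kkk}$, because the operator $x_1\partial_1+\zeta x_2\partial_2+\zeta^2x_3\partial_3$ annihilates $(x_1x_2x_3)^{k+1}$. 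I would state this as the crux and verify it by directly comparing coefficients of $x_1^{k+1}x_2^{k+1}x_3^{k+1}$ on both sides of $\mathcal{X}\varPsi = \sum g_{kkk}(x_1x_2x_3)^{k+1}$.

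The remaining steps are routine once the grading claim is phrased correctly. I would organize the write-up as: (i) record that $k_1+k_2\zeta+k_3\zeta^2=0 \iff k_1=k_2=k_3$; (ii) define the $v_{ijk}$ recursively by \eqref{rec v_k} off the diagonal and by $v_{kkk}=0$ on it, and define $g_{kkk}$ by \eqref{g_k rec}; (iii) prove by induction on $i+j+k$ that each $v_{ijk}\in\mathbb{C}[a,b,c]$ is an $(i,j,k)$-polynomial, handling the off-diagonal case via the recursion and the $L$-additivity observation, and noting the diagonal case is the zero polynomial which is vacuously an $(i,j,k)$-polynomial (empty support); (iv) deduce that $g_{kkk}$ is a $(k,k,k)$-polynomial from \eqref{g_k rec} and the same $L$-additivity; (v) check that with these coefficients $\mathcal{X}\varPsi$ has only monomials of the form $(x_1x_2x_3)^{m}$, with the coefficient of $(x_1x_2x_3)^{k+1}$ equal to $g_{kkk}$, which is where the annihilation of diagonal powers by the linear part is invoked. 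I expect step (v) — i.e.\ confirming that the ``free'' choice $v_{kkk}=0$ is compatible and that nothing obstructs the construction at the diagonal indices — to be the main obstacle, while the $L$-grading bookkeeping in (iii)–(iv) is the main volume of writing but is entirely mechanical.
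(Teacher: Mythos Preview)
Your proposal is correct and is essentially the standard argument; the paper itself does not give a proof at all, stating only that ``the proof is similar to the proof of Theorem 3.4.2 in \cite{RS}'' (the two-dimensional analogue). Your sketch --- induction on $i+j+k$, $L$-additivity of the parameter monomials under the recursion \eqref{rec v_k}, and the observation that $k_1+\zeta k_2+\zeta^2 k_3=0$ iff $k_1=k_2=k_3$ so that the diagonal case is handled by the convention $v_{kkk}=0$ and the definition \eqref{g_k rec} of $g_{kkk}$ --- is exactly the three-dimensional adaptation of that referenced proof.

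One small clarification for your step (v): you seem momentarily uncertain whether the freedom to set $v_{kkk}=0$ needs a separate compatibility check. It does not: when you extract the coefficient of $x_1^{k+1}x_2^{k+1}x_3^{k+1}$ in $\mathcal{X}\varPsi$, the linear part of $\mathcal{X}$ contributes $(k+1)(1+\zeta+\zeta^2)v_{kkk}=0$ regardless of the value of $v_{kkk}$, so the equation at the diagonal index imposes no constraint on $v_{kkk}$ and simply \emph{defines} $g_{kkk}$ via \eqref{g_k rec}. This is precisely your ``annihilation'' observation, and once stated this way there is nothing further to verify.
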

The proof is similar to the proof of Theorem 3.4.2 in \cite{RS}. Using Theorem \ref{thmKKK} and formulae \eqref{rec v_k} and \eqref{g_k rec} we can derive an efficient algorithm for computing the integrability quantities as follows.

For any $\nu \in \mathbb{N}_{0}^{3l}$ define $V(\nu)$ recursively, with respect to $|\nu| = \nu_1 + \nu_2 + \cdots + \nu_{3l},$ setting
\begin{center}
$V((0,0, \dots, 0)) = 1;$
\end{center}
and then for $\nu  \neq (0,0, \dots, 0)$,
\begin{center}
$V(\nu) = 0 \quad \text{if} \quad L_1(\nu) = L_2(\nu) = L_3(\nu),$
\end{center}
and when $L_i(\nu)$  are not all the same for $i=1,2,3$,
\begin{equation}\label{V(v)}
\begin{aligned}
V(\nu) = & -\frac{1}{L_1(\nu)+\zeta L_2(\nu)+\zeta^2 L_3(\nu)}\times \\
& \left( \sum_{j=1}^{l}\widetilde{V}(\nu_1, \dots, \nu_j - 1, \dots, \nu_{3l})(L_1(\nu_1, \dots, \nu_j - 1, \dots, \nu_{3l})+1) \right. \\
& \quad + \zeta \sum_{j=l+1}^{2l}\widetilde{V}(\nu_1, \dots, \nu_j - 1, \dots, \nu_{3l})(L_2(\nu_1, \dots, \nu_j - 1, \dots, \nu_{3l})+1)   \\
& \quad + \left.\zeta^2 \sum_{j=2l+1}^{3l}\widetilde{V}(\nu_1, \dots, \nu_j - 1, \dots, \nu_{3l})(L_3(\nu_1, \dots, \nu_j - 1, \dots, \nu_{3l})+1) \right),
\end{aligned}
\end{equation}
where
\begin{equation}\label{Vti}
    \widetilde{V}(\eta) =
    \begin{cases}
        V(\eta) & \text{if $\eta \in \mathbb{N}_{0}^{3l}$} \\
        0 & \text{otherwise}.
    \end{cases}
\end{equation}

The proof of the following theorem is similar to the proof of Theorem 3.4.5 in \cite{RS}, so we omit it.

\begin{theorem}\label{thmV(v)}
For a family of systems of the form (\ref{3D}), let $\Psi$ be the formal series of the form (\ref{FirstIntegral}) and let $ v_{k_1,k_2,k_3} $
and $g_{kkk}$ be the polynomials in $\mathbb{C}[a,b,c]$ given by \eqref{rec v_k} and \eqref{g_k rec}, respectively, which satisfy the conditions of Theorem \ref{thmKKK}. Then

1) for $\nu \in \mathrm{Supp}(v_{k_1,k_2,k_3}),$ the coefficient $v_{k_1,k_2,k_3}^{(\nu)}$ of $[\nu]$ in $v_{k_1,k_2,k_3}$ is $V(\nu)$;

2) for $\nu \in \mathrm{Supp}(g_{kkk}),$ the coefficient $g_{kkk}^{(\nu)}$ of $[\nu]$ in $g_{kkk}$ is
\begin{equation}
\begin{aligned}\label{g_kkk for alg}
g_{kkk}^{(\nu)} &= \sum_{j=1}^{l}\widetilde{V}(\nu_1, \dots, \nu_j - 1, \dots, \nu_{3l})(L_1(\nu_1, \dots, \nu_j - 1, \dots, \nu_{3l})+1) \\
\quad &+ \zeta \sum_{j=l+1}^{2l}\widetilde{V}(\nu_1, \dots, \nu_j - 1, \dots, \nu_{3l})(L_2(\nu_1, \dots, \nu_j - 1, \dots, \nu_{3l})+1)   \\
\quad &+ \zeta^2 \sum_{j=2l+1}^{3l}\widetilde{V}(\nu_1, \dots, \nu_j - 1, \dots, \nu_{3l})(L_3(\nu_1, \dots, \nu_j - 1, \dots, \nu_{3l})+1),
\end{aligned}
\end{equation}
where $\widetilde V$ is defined by \eqref{Vti}.
\end{theorem}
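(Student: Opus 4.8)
The plan is to establish Theorem \ref{thmV(v)} by translating the scalar recursions \eqref{rec v_k} and \eqref{g_k rec} into the coefficient-wise setting provided by Theorem \ref{thmKKK}, and then matching them term by term against the definition \eqref{V(v)} of $V(\nu)$. First I would fix a multi-index $\nu \in \mathbb{N}_0^{3l}$ and recall that, by Theorem \ref{thmKKK}(2), $v_{k_1,k_2,k_3}$ is an $(k_1,k_2,k_3)$-polynomial, so the monomial $[\nu]$ can contribute to $v_{k_1,k_2,k_3}$ only when $L(\nu) = (k_1,k_2,k_3)$. This means that the statement ``$v_{k_1,k_2,k_3}^{(\nu)} = V(\nu)$'' is only asserted (and only needs to be proved) for the unique triple $(k_1,k_2,k_3) = L(\nu)$; for all other triples the coefficient is automatically zero. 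The proof then proceeds by induction on $|\nu| = \nu_1 + \cdots + \nu_{3l}$.

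For the base case $|\nu| = 0$, i.e. $\nu = (0,\dots,0)$, we have $[\nu] = 1$, $L(\nu) = (0,0,0)$, and the coefficient of $x_1 x_2 x_3$ in $\Psi$ is $v_{000} = 1 = V((0,\dots,0))$. For the inductive step, I would take $\nu \ne (0,\dots,0)$ with $L(\nu) = (k_1,k_2,k_3)$ and examine the right-hand side of \eqref{rec v_k} evaluated at this triple, extracting the coefficient of $[\nu]$. The key combinatorial observation is that each term $a_{p_j q_j r_j} v_{k_1 - p_j, k_2 - q_j, k_3 - r_j}$ in \eqref{rec v_k} contributes to the coefficient of $[\nu]$ exactly the coefficient of $[\nu'] := [\nu - e_j]$ in $v_{k_1 - p_j, k_2 - q_j, k_3 - r_j}$, provided $\nu_j \ge 1$ (so that $\nu - e_j \in \mathbb{N}_0^{3l}$); and one checks that $L(\nu - e_j) = (k_1 - p_j, k_2 - q_j, k_3 - r_j)$, which is precisely the triple for which Theorem \ref{thmKKK}(2) makes the coefficient of $[\nu - e_j]$ meaningful. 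By the induction hypothesis this coefficient equals $\widetilde V(\nu - e_j)$ (the tilde absorbing the case $\nu_j = 0$). The analogous bookkeeping applies to the $b$- and $c$-sums, with the appropriate cyclic permutations of the index slots and the factors $\zeta$, $\zeta^2$; likewise the factor $(k_1 + 1 - p_j)$ in \eqref{rec v_k} becomes $L_1(\nu - e_j) + 1$ after substituting $k_1 = L_1(\nu)$ and noting $L_1(\nu) - p_j = L_1(\nu - e_j)$. Collecting everything, the coefficient of $[\nu]$ on the right of \eqref{rec v_k} is exactly the bracketed expression in \eqref{V(v)}, divided by $k_1 + \zeta k_2 + \zeta^2 k_3 = L_1(\nu) + \zeta L_2(\nu) + \zeta^2 L_3(\nu)$, which is $V(\nu)$. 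The case $L_1(\nu) = L_2(\nu) = L_3(\nu)$ with $|\nu| > 0$ is handled separately using Theorem \ref{thmKKK}(3): then $(k_1,k_2,k_3) = (k,k,k)$ and $v_{kkk} = 0$, so its $[\nu]$-coefficient is $0 = V(\nu)$, consistent with the definition. Part 2) follows by the same extraction applied to \eqref{g_k rec} instead of \eqref{rec v_k}, with no division since \eqref{g_k rec} has no denominator; here one uses that the coefficient of $[\nu]$ in $g_{kkk}$ is obtained from coefficients of $v_{k-p_j,\dots}$ which are already known to equal $\widetilde V(\nu - e_j)$ by Part 1).

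The main obstacle is purely notational rather than conceptual: one must carefully track which of the $3l$ index slots of $\nu$ corresponds to which of the three families of monomials and which cyclic permutation of $(p_j,q_j,r_j)$ governs the shift in the $(k_1,k_2,k_3)$ index, so that the $L_1$, $L_2$, $L_3$ appearing in \eqref{V(v)} and \eqref{g_kkk for alg} line up with the correct coordinate in \eqref{rec v_k} and \eqref{g_k rec}. Once the correspondence $\nu \leftrightarrow \nu - e_j$, $L_i(\nu) \leftrightarrow k_i$ is set up cleanly, each of the three sums matches mechanically. Since this is exactly the bookkeeping carried out in the proof of Theorem 3.4.5 in \cite{RS}, and the cyclic $\zeta$-structure here is a direct three-dimensional analogue of the two-dimensional argument there, I would simply indicate the correspondence and refer to \cite{RS} for the routine verification, as the authors do.
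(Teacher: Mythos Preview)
Your proposal is correct and follows exactly the approach the paper has in mind: the paper omits the proof entirely, stating only that it is similar to the proof of Theorem~3.4.5 in \cite{RS}, and your outline is precisely the inductive coefficient-matching argument that proof consists of. Your sketch is in fact more detailed than what the paper provides, and the bookkeeping you identify (the correspondence $\nu \leftrightarrow \nu - e_j$, $L_i(\nu) \leftrightarrow k_i$, with the cyclic shifts for the $b$- and $c$-blocks) is the entire content of the argument.
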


The results presented above yield our second algorithm that can be used for computing the integrability quantities for systems of the form  (\ref{3D}).

Correctness of the {\bf Algorithm 2} follows from Theorem \ref{thmV(v)}. In the following we present several examples of different sets (see (\ref{set-al})) to demonstrate the applicability and the computational efficiency of our two algorithms. The experimental results show the applicability and efficiency of our algorithms. All the experiments were made in Maple 18, running under a computer with Intel(R) Xeon(R) Silver 4210 CPU @ 2.20GHz.

\begin{equation}\label{set-al}
\begin{split}
S_1&=\{(2,0,0),(1,0,0),(0,1,0),(0,0,1)\},\\
S_2&=\{(1,0,0),(0,1,0),(0,0,1)\},\\
S_3&=\{(1,0,0),(0,0,1)\}.
\end{split}
\end{equation}

\begin{table}[htbp]
\begin{center}
\caption{Computational times (in seconds) of the two algorithms for different sets of $S$.}\label{Tab-1}
\smallskip
\begin{tabular}{|c|c|c||c||c||c||c|}
  \hline
\multirow{2}{*}{$k$} & \multicolumn{3}{|c|}{{\bf Algorithm 1}} & \multicolumn{3}{|c|}{{\bf Algorithm 2}} \\
\cline{2-7}
  & $S_1$ & $S_2$ & $S_3$ & $S_1$ & $S_2$ & $S_3$ \\ \hline
 1 & 0.015 & 0.015 &  0.012 & 0.744 & 0.505 &  0.049\\
 2  & 0.303 & 0.152 & 0.128 & 63.486 & 9.723 & 1.563\\
 3  & 3.769 & 2.135 & 0.607 & - & 304.620 & 10.650\\
 4  & 75.628 & 15.982 & 2.073 & - & - & 71.682\\
 5  & 3486.938 & 158.924 & 5.857 & - & - & 510.636\\ \hline
\end{tabular}
\end{center}
\end{table}

\begin{table}[htbp]
\begin{center}
\caption{Number of terms in the polynomial $g_{kkk}$.}\label{Tab-2}
\smallskip
\begin{tabular}{|c|c|c||c|}
  \hline
\multirow{2}{*}{$k$} & \multicolumn{3}{|c|}{\# of terms in $g_{kkk}$}  \\
\cline{2-4}
  & $S_1$ & $S_2$ & $S_3$ \\ \hline
 1 & 12 & 12 &  4 \\
 2  & 404 & 280 &  32 \\
 3  & 3644 & 1676 & 100 \\
 4  & 19142 & 6164 &  214\\
 5  & 74790 & 17572 &  388\\ \hline
\end{tabular}
\end{center}
\end{table}

\begin{remark}\label{rem-algo-1}
From Table \ref{Tab-1}, one can see that the {\bf Algorithm 2} can be applied to the computation of the integrability quantities for systems of small set of $S$ or low orders of $k$ (say, $k<4$). The {\bf Algorithm 1} is expected to have a better performance for systems of the form (\ref{3D}) with many parameters. The two algorithms are implemented in Maple, and the source code of the Maple programs is avaliable at {\textcolor{blue}{\url{https://github.com/Bo-Math/integrability}}}.
\end{remark}

\section{Integrability conditions for a family of quadratic systems}

The aim of this section is to look for integrable systems for the family of the following quadratic systems
\begin{equation}\label{example2}
    \begin{aligned}
        \dot x_1 &= x_1 + a_{100}x_1^2 + a_{010}x_1 x_2+a_{001}x_1 x_3, \\
        \dot x_2 &= \zeta (x_2 + b_{100}x_1x_2 + b_{010}x_2^2 + b_{001}x_2x_3), \\
        \dot x_3 &= \zeta^2 (x_3 + c_{100}x_1x_3 + c_{010}x_2x_3 + c_{001}x_3^2).
    \end{aligned}
\end{equation}
The set $S$ corresponding to this system is  $S = \{ (1,0,0), (0,1,0), (0,0,1)\}$.

A subset of integrable systems in a given family of systems \eqref{3D}, which is possible to detect without
computing the integrability quantities, is the set of systems admitting a reversible symmetry introduced in \cite{W,LPW}.

To define this symmetry we rewrite our system (\ref{3D}) as
\begin{equation}\label{zX}
\dot{x} = \mathcal{Z} x + X(x) = F(x),
\end{equation}
where $\mathcal{Z} = \text{diag}[1, \zeta, \zeta^2]$.

\begin{definition}
We say that system (\ref{zX}) is \emph{$\zeta$-reversible} if
\begin{equation}\label{zetaRev}
    A F(x) = \zeta F(Ax),
\end{equation}
for $\zeta ^3 = 1, \zeta \neq 1$ and some matrix
\begin{equation}\label{mxA}
    A =
        \begin{pmatrix}
            0 & \alpha & 0 \\
            0 & 0 & \beta \\
            \gamma & 0 & 0 \\
        \end{pmatrix},
\end{equation}
where $\alpha \beta \gamma = 1.$ System (\ref{zX}) is called \emph{equivariant} if (\ref{zetaRev}) holds for $\zeta = 1$.
\end{definition}

The above definition is based on the results of \cite{W,LPW,GJR} (see also \cite{TM,WRZ}). The following result gives the information for system (\ref{3D}) to have a local analytic first integral in a neighborhood of the origin.

\begin{theorem}\label{th_5}
If system (\ref{3D}) is $\zeta$-reversible then it admits a local analytic first integral $\Phi$ of the form \eqref{int_Phi} in a neighborhood of the origin.
\end{theorem}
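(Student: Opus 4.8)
The plan is to exploit the order-three symmetry encoded by $\zeta$-reversibility to \emph{symmetrize} the canonical formal series $\Psi$ of \eqref{FirstIntegral}--\eqref{FirstIntegral2} into a genuine formal first integral, and then to upgrade that formal first integral to an analytic one via Theorem \ref{th_2}.

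First I would record two elementary facts about the matrix $A$ of \eqref{mxA}. Writing $\phi\circ A$ for the formal series $x\mapsto\phi(Ax)$, the chain rule together with the reversibility relation \eqref{zetaRev} gives, for every formal power series $\phi$,
\[
\mathcal{X}(\phi\circ A)(x)=(\nabla\phi)(Ax)\cdot AF(x)=\zeta\,(\nabla\phi)(Ax)\cdot F(Ax)=\zeta\,\big((\mathcal{X}\phi)\circ A\big)(x).
\]
Second, since $Ax=(\alpha x_2,\beta x_3,\gamma x_1)$ and $\alpha\beta\gamma=1$, we have $(Ax)_1(Ax)_2(Ax)_3=\alpha\beta\gamma\,x_1x_2x_3=x_1x_2x_3$; hence $A$ fixes the monomial $x_1x_2x_3$, and, because $A^3=\alpha\beta\gamma\,I=I$, so does $A^2$. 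In particular every power series in the single variable $x_1x_2x_3$ is invariant under $\phi\mapsto\phi\circ A$.

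Now let $\Psi$ be the series of the form \eqref{FirstIntegral} furnished by Theorem \ref{thmKKK}, so that $g:=\mathcal{X}\Psi=\sum_{k\ge1}g_{kkk}(x_1x_2x_3)^{k+1}$. Since $g$ is a function of $x_1x_2x_3$ it is $A$-invariant, so the two facts above yield
\[
\mathcal{X}(\Psi\circ A)=\zeta\,(g\circ A)=\zeta\,g,\qquad
\mathcal{X}(\Psi\circ A^2)=\zeta\,\big(\mathcal{X}(\Psi\circ A)\big)\circ A=\zeta^2\,g.
\]
Setting $\Phi:=\tfrac13\big(\Psi+\Psi\circ A+\Psi\circ A^2\big)$, we get $\mathcal{X}\Phi=\tfrac13(1+\zeta+\zeta^2)\,g=0$, so $\Phi$ is a formal first integral of \eqref{3D}. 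Moreover each of $\Psi$, $\Psi\circ A$, $\Psi\circ A^2$ has lowest-degree term $x_1x_2x_3$ (again because $A^j$ fixes $x_1x_2x_3$ and linear substitutions preserve degree), so $\Phi=x_1x_2x_3+\text{h.o.t.}$ is of the form \eqref{int_Phi}. The one point that needs care here is the choice of weights in the symmetrization: averaging with weights $(1,1,1)$ works because $1+\zeta+\zeta^2=0$ annihilates $\mathcal{X}\Phi$ while $1+1+1=3\neq0$ preserves the leading coefficient, whereas the more tempting choice $(1,\zeta^{-1},\zeta^{-2})$ would kill the leading term as well.

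Finally, having produced a formal first integral of \eqref{3D} of the form \eqref{int_Phi}, Theorem \ref{th_2}(1) gives $Y_1+Y_2+Y_3\equiv0$, and then Theorem \ref{th_2}(2) supplies a convergent first integral of the form \eqref{int_Phi} in a neighborhood of the origin, which is the assertion. I expect no serious obstacle along this route: the only delicate items are the bookkeeping in the operator identity $\mathcal{X}(\phi\circ A)=\zeta\,(\mathcal{X}\phi)\circ A$ and its iterate (in particular keeping the factor $\zeta$ on the right side), and pinning down the correct averaging weights; everything else is routine, and the appeal to Theorem \ref{th_2} takes care of convergence.
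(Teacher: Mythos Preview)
Your argument is correct, and it reaches the same endpoint as the paper---a formal first integral of the form \eqref{int_Phi}, followed by an appeal to Theorem~\ref{th_2} for convergence---but the first step is done differently. The paper simply cites Proposition~11 of \cite{LPW} and Proposition~2.5 of \cite{GJR} to assert that a $\zeta$-reversible system \eqref{3D} admits a formal first integral of the form \eqref{int_Phi}, and then invokes Theorem~\ref{th_2}. You instead give a self-contained construction: from the reversibility identity $AF=\zeta\,F\circ A$ you derive the operator relation $\mathcal{X}(\phi\circ A)=\zeta\,(\mathcal{X}\phi)\circ A$, observe that $A$ fixes $x_1x_2x_3$ (hence fixes the series $g=\mathcal{X}\Psi$), and then symmetrize the canonical series $\Psi$ of Theorem~\ref{thmKKK} over the order-three action of $A$ to kill $\mathcal{X}$ while keeping the leading term. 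This is exactly the mechanism behind the cited external results specialized to the present setting, so what you gain is transparency and independence from outside references; what the paper gains is brevity. The convergence step via Theorem~\ref{th_2} is identical in both.
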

\begin{proof}
By Proposition 11 of \cite{LPW} and Proposition 2.5 of \cite{GJR}, any
$\zeta$-reversible system \eqref{3D} admits a formal first integral of the form \eqref{int_Phi}.
By Theorem \ref{th_2} the system has also a convergent first integral of the form \eqref{int_Phi}.
\end{proof}

According to Proposition 4.12 of \cite{GJR}, in order to find $\zeta$-reversible systems in family \eqref{example2}
one can compute the fourth elimination ideal $I_{\zeta}$ of the ideal
\begin{equation*}
\begin{aligned}
I^{(\zeta)} = \langle 1 - &w \alpha \beta \gamma, \zeta \alpha a_{100} - b_{010}, \zeta \beta a_{010} - b_{001}, \zeta \gamma a_{001} - b_{100}, \zeta \alpha b_{100} - c_{010}, \\
&\zeta \beta b_{010} - c_{001}, \zeta \gamma b_{001} - c_{100}, \zeta \alpha c_{100} - a_{010}, \zeta \beta c_{010} - a_{001}, \zeta \gamma c_{001} - a_{100} \rangle
\end{aligned}
\end{equation*}
in the ring $\mathbb{C}[w,\alpha,\beta,\gamma, a,b,c]$, so $I_{\zeta} \cap \mathbb{C}[w,\alpha,\beta,\gamma, a,b,c]$.

Computations with \texttt{Singular} \cite{GTZ2} over the field $\mathbb{Q}[\zeta]$ yield
\begin{equation*}
\begin{aligned}
I_{\zeta} = \langle &b_{010} b_{100}-a_{100} c_{010}, b_{001} b_{100}-a_{001} c_{100}, a_{010} b_{100}-c_{010} c_{100}, b_{001} b_{010}-a_{010} c_{001}, \\
&a_{001} b_{010} -c_{001} c_{010}, a_{100} b_{001}-c_{001} c_{100}, a_{010} a_{100}-b_{010} c_{100}, a_{001} a_{100}-b_{100} c_{001}, \\
&a_{001} a_{010}-b_{001} c_{010} \rangle.
\end{aligned}
\end{equation*}

Thus, the variety ${\bf V}(I_{\zeta})$ is the Zariski closure of the set of $\zeta$-reversible systems in the family \eqref{example2}
and by Theorem \ref{th_5} all systems from ${\bf V}(I_{\zeta})$ admit an analytic first integral of the form \eqref{int_Phi}.

To find another integrable systems in family \eqref{example2} we need to compute the integrability quantities and look for the irreducible decomposition of the variety that they generate. However, it involves extremely laborious computations, which we were not able to complete for full family \eqref{example2}. To facilitate the calculations, we consider the subfamily of \eqref{example2} setting $b_{001}=c_{100}=0$ and $b_{010}=1$. Under these conditions the system has the form
\begin{equation}\label{example}
\begin{aligned}
\dot x_1 &= x_1 + a_{100}x_1^2 + a_{010}x_1x_2+a_{001}x_1x_3, \\
\dot x_2 &= \zeta (x_2 + b_{100}x_1x_2 + x_2^2), \\
\dot x_3 &= \zeta^2 (x_3 + c_{010}x_2x_3 + c_{001}x_3^2).
\end{aligned}
\end{equation}

For system \eqref{example} using the Integrability Quantities Algorithm we calculated the first five integrability quantities $g_{111}, \dots, g_{555}$. The resulting expressions can be found at the website {\textcolor{blue}{\url{https://github.com/Bo-Math/integrability}}}. Using the routine \texttt{minAssGTZ} (which is based on the algorithm of \cite{GTZ}) we have found that the variety of $I = \langle g_{111}, g_{222}, g_{333}, g_{444}, g_{555} \rangle$ consists of the following nine irreducible components:
\begin{enumerate}
    \item[(1)] ${\bf V}(J_1),$ where $J_1 = \langle c_{001}, a_{001} \rangle$,
    \item[(2)] ${\bf V}(J_2),$ where $J_2 = \langle c_{010}+\zeta, a_{010} - 2\zeta -1, a_{001} a_{100}-\zeta a_{001} b_{100}+b_{100} c_{001} \rangle$,
    \item[(3)] ${\bf V}(J_3),$ where $J_3 = \langle c_{010} + \zeta, a_{001} + \zeta c_{001}, a_{010} a_{100}-(\zeta+1) a_{100} + \zeta b_{100} \rangle$,
    \item[(4)] ${\bf V}(J_4),$ where $J_4 = \langle b_{100}, a_{100} \rangle$,
    \item[(5)] ${\bf V}(J_5),$ where $J_5 = \langle b_{100}, a_{001} \rangle$,
    \item[(6)] ${\bf V}(J_6),$ where $J_6 = \langle a_{010} c_{001}-(\zeta+1) c_{001} c_{010}+(\zeta+1) a_{001}, a_{010} a_{100}-(\zeta+1) a_{100} c_{010}+(\zeta+1) b_{100}, a_{001} a_{100}-b_{100} c_{001} \rangle$,
    \item[(7)] ${\bf V}(J_7),$ where $J_7 = \langle c_{010}, a_{010} a_{100}+(\zeta+1) b_{100} \rangle$,
    \item[(8)] ${\bf V}(J_8),$ where $J_8 = \langle c_{010}, a_{001} \rangle$,
    \item[(9)] ${\bf V}(J_9),$ where $J_9 = \langle b_{100}, a_{010} - \zeta c_{010} \rangle$.
\end{enumerate}

The equations defining these varieties give the necessary conditions for integrability of system \eqref{example}. We prove that some of these conditions are also the sufficient conditions for integrability, in fact, the corresponding systems are linearalizable.

\begin{theorem}
If the parameters of system \eqref{example} belong to the varieties of one of the ideals $J_1, J_4,  J_5,  J_8 $ or $J_9$,
then the corresponding system is linearizable.
\end{theorem}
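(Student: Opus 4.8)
\emph{Proof proposal.} The plan is to show that for each of the five ideals, the system obtained from \eqref{example} by imposing the corresponding relations admits an invertible analytic substitution $x = y + \cdots$ carrying it to its linear part $\dot y = \mathcal{Z}y$. By Lemma~\ref{lem1} the numbers $(\alpha,\kappa)-\kappa_m$ are bounded away from zero whenever nonzero, so condition (a) of Pliss's theorem (Theorem~\ref{th_P}) holds automatically; hence it suffices to produce a \emph{formal} linearizing transformation in each case, convergence then being free. (In fact every substitution used below is manifestly analytic, the only denominators occurring being such differences $(\alpha,\kappa)-\kappa_m$.) Since linearizability implies the existence of a first integral of the form \eqref{int_Phi}, this will also confirm that on these components the vanishing of $g_{111},\dots,g_{555}$ is not merely necessary but sufficient for integrability.

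In each case the imposed relations put the system in triangular form, and I would linearize it coordinate by coordinate with three elementary devices. \emph{(i) Bernoulli substitutions:} the scalar equation $\dot z = \kappa z(1 + c z)$ is linearized by $z \mapsto z/(1+cz)$; more generally, if $\dot z = \kappa z(1 + g z)$ with $g$ a known power series in variables already linearized, then $z \mapsto z/(1 - v z)$ linearizes it, where $v$ solves the affine homological equation $(\mathcal{X} + \kappa)v = -\kappa g$, which is solvable because $-\kappa$ is never of the form $(\alpha,\kappa)$ for the exponents $\alpha$ occurring in $g$. \emph{(ii) Darboux factors:} since $\mathcal{X}(\ln(1+x_2)) = \zeta x_2$ (and analogously for $x_1,x_3$ once they carry a linear equation), multiplying a coordinate by a suitable power of $(1+x_2)$ removes a coupling term proportional to $x_2$; similarly a factor $e^{\psi}$ with $\mathcal{X}(\psi) = -(\text{coupling})$ removes a coupling to an already linearized coordinate, $\psi$ being solvable in view of Lemma~\ref{lem1}. \emph{(iii)} In the cases $J_1$ and $J_8$ the $(x_1,x_2)$-equations do not involve $x_3$ and form a quadratic planar system whose linear part $\mathrm{diag}(1,\zeta)$ is non-resonant (as $\zeta\notin\mathbb{R}$) and lies in the Poincar\'e domain, hence is analytically linearizable by Poincar\'e's classical theorem.

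Putting these together: for $J_8$ ($c_{010}=a_{001}=0$) linearize $(x_1,x_2)$ by Poincar\'e's theorem and then $x_3$ by $y_3 = x_3/(1+c_{001}x_3)$; for $J_1$ ($a_{001}=c_{001}=0$) linearize $(x_1,x_2)$ by Poincar\'e's theorem and then $x_3$ by the Darboux factor $y_3 = x_3(1+x_2)^{-\zeta c_{010}}$. For $J_4$, $J_5$ and $J_9$ (each contains $b_{100}$) the second equation reduces to $\dot x_2 = \zeta x_2(1+x_2)$, linearized by $y_2 = x_2/(1+x_2)$; then $\dot x_3 = \zeta^2 x_3(1 + c_{010}x_2 + c_{001}x_3)$ is linearized by the Darboux factor $(1+x_2)^{-\zeta c_{010}}$ followed by a Bernoulli substitution; and finally $x_1$ is linearized by a Darboux factor removing the $x_2$-coupling, then (when $a_{001}\neq0$) a factor $e^{\psi}$ with $\mathcal{X}(\psi) = -a_{001}x_3$ removing the $x_3$-coupling, and then a Bernoulli substitution removing the residual $a_{100}x_1$-term.

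The only step that is not a routine verification is this last Bernoulli substitution in the case $J_9$. After the Darboux and exponential factors the $x_1$-equation has become $\dot{\hat x}_1 = \hat x_1(1 + a_{100}G\,\hat x_1)$ with $G = (1+x_2)^{a_{010}/\zeta}e^{-\psi}$ a power series in the already linearized variables $y_2,y_3$; the Bernoulli step requires solving $(\mathcal{X}+1)v = -a_{100}G$ for a series in $y_2,y_3$, and since $(\mathcal{X}+1)$ acts diagonally on such series with eigenvalue $a\zeta + b\zeta^2 + 1$ on $y_2^a y_3^b$, which vanishes only for $a=b=1$, this is possible iff the coefficient of $y_2 y_3$ in $G$ vanishes. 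A direct computation (in which $\zeta^3 = 1$ collapses the two contributing terms, and the exponent $a_{010}/\zeta$ is the one forced by the requirement that the Darboux factor kill $a_{010}x_2$) shows that this coefficient is a nonzero multiple of $a_{001}(a_{010} - \zeta c_{010})$. It therefore vanishes precisely on $\mathbf{V}(J_9)$ (where $a_{010} = \zeta c_{010}$) and on $\mathbf{V}(J_5)$ (where $a_{001}=0$) --- which is exactly why these components, rather than the whole stratum $\{b_{100}=0\}$, consist of linearizable systems. I expect this one computation, together with checking that the auxiliary homological equations for $\psi$, for $v$, and for the $x_3$-step carry no resonance (they involve only $x_2$, or only $x_2$ and $x_3$ with a factor $y_3$ always present, so the relevant divisors never vanish), to be the entire nontrivial content; everything else reduces to Poincar\'e's theorem, the explicit substitutions above, and Theorem~\ref{th_P} for convergence.
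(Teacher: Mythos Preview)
Your approach is correct in substance and in the same spirit as the paper's --- both exploit the triangular structure under each specialization and linearize one coordinate at a time --- but the execution differs. The paper treats $J_1$ and $J_4$ by an abstract normal--form argument (invoking \cite{petekromanovski} to observe that no $(k,k,k)$-monomial can be built from the surviving parameters, so the distinguished normal form is already linear, and then Theorem~\ref{th_P} gives convergence); for $J_5$ and $J_9$ it quotes a lemma of Aziz \cite{Aziz1} in the form \eqref{cond}. Your route is more explicit and self-contained: you assemble the linearization by hand from Bernoulli substitutions and Darboux/exponential factors, and your computation of the $y_2y_3$-coefficient of $G$ as (a nonzero multiple of) $a_{001}(a_{010}-\zeta c_{010})$ is a pleasant explanation of why exactly $J_5$ and $J_9$ split off from the stratum $\{b_{100}=0\}$, a point the paper does not make visible.

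One genuine slip to repair: in case $J_1$ the parameter $b_{100}$ is unconstrained, so $(1+x_2)$ is \emph{not} an invariant hypersurface of \eqref{c1}; indeed
\[
\mathcal{X}\bigl(\ln(1+x_2)\bigr)=\frac{\zeta x_2(1+b_{100}x_1+x_2)}{1+x_2}=\zeta x_2+\frac{\zeta b_{100}x_1x_2}{1+x_2},
\]
so your Darboux substitution $y_3=x_3(1+x_2)^{-\zeta c_{010}}$ leaves the residual $\dot y_3=\zeta^2 y_3\bigl(1-\dfrac{b_{100}c_{010}x_1x_2}{1+x_2}\bigr)$ rather than $\dot y_3=\zeta^2 y_3$. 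The fix is immediate within your own framework: once $(x_1,x_2)$ has been linearized to $(y_1,y_2)$ by Poincar\'e's theorem, the third equation reads $\dot x_3=\zeta^2 x_3\bigl(1+c_{010}\,x_2(y_1,y_2)\bigr)$, and the exponential factor $y_3=x_3\,e^{-\phi}$ with $\phi$ solving $\bigl(y_1\partial_{y_1}+\zeta y_2\partial_{y_2}\bigr)\phi=\zeta^2 c_{010}\,x_2(y_1,y_2)$ linearizes it; this homological equation is solvable since $a+b\zeta\neq 0$ for $(a,b)\in\mathbb{N}_0^2\setminus\{(0,0)\}$. (Alternatively you could simply argue, as the paper does, that the surviving parameters in \eqref{c1} admit no $(k,k,k)$-monomials, so the formal normal form is linear and Theorem~\ref{th_P} finishes.) With this correction in place, your argument goes through.
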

\begin{proof}
Case \textit{(1)}. In this case the system is
\begin{equation} \label{c1}
\begin{aligned}
\dot x_1 &= x_1(1 + a_{100} x_1 + a_{010} x_2),
\\
\dot x_2 &= \zeta x_2(1 + b_{100} x_1 + x_2),
\\
\dot x_3 &= \zeta^2 x_3(1 + c_{010} x_2),
\end{aligned}
\end{equation}
and it is a subcase of the system
\begin{equation} \label{c1-1}
\begin{aligned}
\dot x_1 &= x_1(1 + a_{100} x_1 + a_{010} x_2),
\\
\dot x_2 &= \zeta x_2(1 + b_{100} x_1 + b_{010} x_2),
\\
\dot x_3 &= \zeta^2 x_3(1 + c_{010} x_2).
\end{aligned}
\end{equation}
By Theorem 1 of \cite{petekromanovski}, there is a normal form \eqref{normalForm} of system \eqref{c1-1} such that
$$Y_1^{(k+1,k,k)},\quad Y_2^{(k,k+1,k)}, \quad Y_3^{(k,k,k+1)}  $$
are $(k,k,k)$-polynomials. But products of parameters of \eqref{c1-1} cannot form $(k,k,k)$-polynomials. Thus, the normal form is linear. By Lemma \ref{lem1} and the Theorem \ref{th_P} system \eqref{c1-1} is analytically linearizable.

Similar reasoning works in case \textit{(4)}.

Case \textit{(5)}. We first note that by \cite[Lemma 1]{Aziz1} the system
\begin{equation*}
\dot{X_1} = \lambda X_1, \quad \dot{X_2} = \nu X_2, \quad \dot{x_3} =  x_3(-\mu + \epsilon x_3 + A(X_1, X_2)),
\end{equation*}
where $\lambda, \mu, \nu \in \mathbb{Z}^+, \epsilon$ is a parameter and $A(X_1, X_2)$ is a function of $X_1$ and $X_2,$ is linearizable if there exist functions $\gamma (X_1, X_2)$ and $\xi(X_1,X_2)$ such that $\gamma(0,0) = 1$ and
\begin{equation}\label{cond}
\dot{\xi} - \mu \xi = \epsilon \gamma, \quad \dot{\gamma} = A \gamma.
\end{equation}
It is easy to see that the statement remains valid also in the case $\lambda=1, \nu=\zeta, \mu=-\zeta^2$. The system of this case is
\begin{equation}\label{c5}
\begin{aligned}
\dot x_1 &= x_1(1 + a_{100} x_1 + a_{010} x_2),
\\
\dot x_2 &= \zeta x_2(1 +  x_2),
\\
\dot x_3 &= \zeta^2 x_3(1 + c_{010} x_2 + c_{001} x_3).
\end{aligned}
\end{equation}

By Theorem \ref{th_S} the system of two first equations is linearizable, so after the linearizing transformations $x_1=X_1(x_1,x_2), \ x_2=X_2(x_1,x_2)$ we obtain the system
$$
\dot X_1=X_1, \quad \dot X_2=\zeta X_2, \quad \dot x_3 =  \zeta^2   x_3 (1 + c_{010} X_2 + c_{001} x_3).
$$

We use the above statement, where
$A(X_1, X_2) = \zeta^2 c_{010} X_2$, so \eqref{cond} has the form
\begin{equation}\label{co11}
\dot{\xi} + \zeta^2 \xi = \zeta^2 c_{001} \gamma, \quad  \dot{\gamma} = \zeta^2 c_{010} X_2 \gamma.
\end{equation}
Following the proof of \cite[Lemma 1]{Aziz1}, it can be easily verified that there are convergent series $\xi$ and $\gamma$ satisfying \eqref{co11}, so the system is linearized. Indeed, if we make a substitution $\gamma = e^{\theta}$, where $\theta = \theta(X_1, X_2),$ the second equation becomes $\dot{\theta} = \zeta^2 c_{010} X_2$. The solution to this differential equation is $\theta(X_1,X_2) = \zeta c_{010} X_2$, which is convergent and subsequently $\gamma$ is convergent. We see that $\gamma$ is dependent only on $X_2$ and from $\gamma = e^{\zeta c_{010} X_2}$ we obtain
\begin{equation*}
\gamma(X_1, X_2) = e^{\zeta c_{010} X_2} = 1 + \sum_{j \geq 1} \frac{\zeta^j c_{010}^j}{j!} X_2^j.
\end{equation*}
Let $\xi = \xi(X_1, X_2) = \sum_{i+j \geq 0} e_{ij} X_1^i X_2^j$. If we substitute these expressions into the first differential equation of (\ref{co11}), we get
\begin{equation}\label{co12}
\begin{aligned}
\sum_{i+j \geq 0} (i+\zeta j + \zeta^2)e_{ij} X_1^i X_2^j = \zeta^2 c_{001} + \sum_{j \geq 1} \frac{\zeta^{j-1} c_{001}c_{010}^j}{j!}X_2^j.
\end{aligned}
\end{equation}
Equating the coefficients of each monomial $X_1^i X_2^j$ on both sides, we immediately see that $e_{ij} = 0$, for any $i \geq 1,$ and $$e_{0j} = \frac{\zeta^{j-1} c_{001} c_{010}^j}{j! (\zeta j + \zeta^2)}.$$
Thus, the expression for $\xi = \xi (X_1, X_2)$ is
$$\xi (X_1, X_2) = \sum_{j \geq 0} \frac{\zeta^{j-1} c_{001} c_{010}^j}{j!(\zeta j + \zeta^2)} X_2^j.$$ It can be easily proved that $|\zeta j + \zeta^2| \geq 1$, for any $j \geq 1.$  This implies that function $\xi$ is convergent.

Case \textit{(8)}. In this case the system is
\begin{equation}\label{c8}
\begin{aligned}
\dot x_1 &= x_1(1 + a_{100} x_1 + a_{010} x_2),
\\
\dot x_2 &= \zeta x_2 (1 + b_{100} x_1 + x_2),
\\
\dot x_3 &= \zeta^2 x_3(1 + c_{001} x_3).
\end{aligned}
\end{equation}
By Theorem \ref{th_S}, the system of two first equations is linearizable at the origin in the $ (x_1,x_2)$ plane. The third equation is Darboux linearizable. Thus, system \eqref{c8} is analytically linearizable.

Case \textit{(9)}. The system of this case is
\begin{equation}\label{c9}
\begin{aligned}
\dot x_1 &= x_1(1 + a_{100} x_1 + \zeta c_{010} x_2  + a_{001} x_3),
\\
\dot x_2 &= \zeta  x_2(1 +  x_2),
\\
\dot x_3 &= \zeta^2   x_3(1 + c_{010} x_2 + c_{001} x_3).
\end{aligned}
\end{equation}
The system of the last two equations is linearizable, so these equations can be transformed to $\dot{X_2} = \zeta X_2$ and $\dot{X_3} = \zeta^2 X_3$, and after the substitutions we obtain the system
$$
\dot x_1 = x_1(1 + a_{100}x_1 + \zeta c_{010} X_2 + a_{001} X_3), \quad \dot X_2=\zeta X_2, \quad \dot X_3 =  \zeta^2 X_3.
$$
Using similar arguments as in case \textit{(5)} we see that the above system is linearizable.

\end{proof}

\begin{conjecture}
Systems whose parameters belong to the varieties of the ideals $J_2, J_3, J_6, J_7$ are integrable, but not linearizable.
\end{conjecture}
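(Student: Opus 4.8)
The plan is to establish integrability for each of the four families $\mathbf{V}(J_2), \mathbf{V}(J_3), \mathbf{V}(J_6), \mathbf{V}(J_7)$ by exhibiting, in each case, an explicit analytic first integral of the form \eqref{int_Phi}, and then separately to show that none of the corresponding systems is linearizable. For the integrability half, the natural strategy is twofold. First, one checks whether the given component lies in $\mathbf{V}(I_\zeta)$ (the Zariski closure of the $\zeta$-reversible systems); if a whole component is contained there, Theorem \ref{th_5} immediately gives an analytic first integral. When a component is not $\zeta$-reversible, the fallback is to search for a Darboux-type first integral: look for invariant algebraic surfaces of \eqref{example} (each coordinate hyperplane $x_i=0$ is already invariant, and on the components in question one expects additional invariant planes such as $1+b_{100}x_1+x_2=0$ or $1+c_{001}x_3=0$ to appear because of the relations defining $J_2,\dots,J_7$), compute their cofactors, and form a product $\prod f_i^{\mu_i}$ whose logarithmic derivative along $\mathcal X$ vanishes; multiplying by $x_1x_2x_3$ and normalizing gives an integral of the shape \eqref{int_Phi}. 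Equivalently, and this is the cleanest route, one can verify directly that $Y_1+Y_2+Y_3\equiv 0$ on each component by computing enough integrability quantities $g_{kkk}$ with Algorithm 2 and invoking part 2) of Theorem \ref{th_2}; since the $g_{kkk}$ are $(k,k,k)$-polynomials, a bounded computation suffices to confirm they all lie in the radical of $J_i$, but to get a rigorous (not merely computational) proof one should back this up with an explicit integral or reversibility argument.

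For the non-linearizability half, the key tool is the convergent distinguished normal form \eqref{normalForm}, which exists on these components by Theorem \ref{th_2} part 1) since they are integrable. Linearizability is equivalent to $Y_1\equiv Y_2\equiv Y_3\equiv 0$ (so that the normal form is $\dot y_j=\kappa_j y_j$), whereas integrability only forces $Y_1+Y_2+Y_3\equiv 0$. Thus it suffices to exhibit, for a generic parameter point on each of $\mathbf{V}(J_2),\mathbf{V}(J_3),\mathbf{V}(J_6),\mathbf{V}(J_7)$, a single nonzero normal-form coefficient, e.g. to show $Y_1^{(2,1,1)}\neq 0$ (equivalently, that the first linearizability quantity does not vanish identically on the component). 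The first few normal-form coefficients can be computed by the standard recursion bringing \eqref{3D} to \eqref{normalForm}; one shows the resulting polynomial in $(a,b,c)$ is not contained in the radical of $J_i$. A slicker alternative: if the system were analytically linearizable, then by Theorem \ref{th_P} it would be orbitally linearizable and in particular both $x_1x_2x_3$-related monodromy obstructions and the linearizability obstructions in each separate coordinate would vanish; on $\mathbf{V}(J_2)$ and $\mathbf{V}(J_3)$ the relation $c_{010}=-\zeta$ (not $c_{010}=0$) means the third equation $\dot x_3=\zeta^2 x_3(1+c_{010}x_2x_3+\dots)$ carries a genuine resonant interaction that one can show cannot be removed, and similarly on $\mathbf{V}(J_6),\mathbf{V}(J_7)$ the surviving relations tie $a_{010}$ to the other parameters in a way that leaves a nonzero cross-term.

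The order I would carry this out is: (i) for each of $J_2,J_3,J_6,J_7$, test membership in $I_\zeta$ and handle the $\zeta$-reversible sub-cases via Theorem \ref{th_5}; (ii) for the remaining pieces, produce explicit invariant surfaces and a Darboux first integral, or a change of variables reducing to a two-dimensional integrable-but-not-linearizable core (a classical quadratic center) times a linearizable equation; (iii) compute $Y_1,Y_2,Y_3$ up to low order and verify $Y_1+Y_2+Y_3\equiv 0$ as a consistency check using Algorithm 2; (iv) compute the first linearizability quantity (the first nonvanishing $Y_j$ coefficient) and show it is not identically zero on the component, using \texttt{Singular} to check it is outside $\sqrt{J_i}$. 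The main obstacle I expect is step (ii): finding the Darboux first integrals (or the reducing substitution) on $\mathbf{V}(J_6)$ and $\mathbf{V}(J_7)$, where the defining relations are genuinely quadratic in the parameters and the invariant-surface Ansatz may require curves rather than planes, or an integrating-factor argument of Darboux--Jouanolov type; getting a clean closed-form integral there, rather than a purely computational verification that finitely many $g_{kkk}$ vanish, is the delicate part. The non-linearizability claim, by contrast, should be routine once one normal-form coefficient is in hand.
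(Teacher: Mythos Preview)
The statement you are addressing is labelled a \emph{Conjecture} in the paper, and the paper explicitly does \emph{not} prove it: immediately after stating it the authors write ``We do not have a proof of this conjecture, but computations of the normal forms up to order~7 indicate that it should hold true.'' So there is no proof in the paper to compare your attempt against; what you have produced is a strategy outline, not a proof, for an open problem the authors themselves could not settle.

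That said, your outline has both reasonable and problematic pieces. The non-linearizability half is essentially fine as a plan: computing the low-order normal-form coefficients $Y_j^{(k+1,k,k)}$ etc.\ and checking they do not lie in $\sqrt{J_i}$ is exactly what the authors did numerically up to order~7, and making that rigorous for one coefficient is routine (though your aside about Theorem~\ref{th_P} implying orbital linearizability is garbled---linearizability trivially implies orbital linearizability without any appeal to Pliss). The integrability half, however, has a real gap. Your claim that ``since the $g_{kkk}$ are $(k,k,k)$-polynomials, a bounded computation suffices to confirm they all lie in the radical of $J_i$'' is unjustified: there are infinitely many $(k,k,k)$-monomials as $k$ grows, and nothing in Theorem~\ref{thmKKK} gives a finite bound after which vanishing is automatic. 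Checking finitely many $g_{kkk}$ is exactly the evidence the authors already have and is not a proof. Your fallback---$\zeta$-reversibility via Theorem~\ref{th_5}---does not cover these components either: for instance on $\mathbf V(J_2)$ one has $a_{010}=2\zeta+1\neq 0$, and under the specialization $b_{010}=1$, $b_{001}=c_{100}=0$ the ideal $I_\zeta$ forces $a_{010}a_{100}=a_{010}b_{100}=a_{010}c_{001}=a_{001}a_{010}=0$, so only a proper subvariety of $\mathbf V(J_2)$ is $\zeta$-reversible. That leaves the Darboux/invariant-surface search as the only genuine route to integrability, and you correctly flag this as the obstacle---but it is the \emph{entire} obstacle, and your proposal does not advance past it. In short, your plan identifies the right tools but does not close the gap that kept the statement a conjecture in the first place.
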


We do not have a proof of this conjecture, but computations of the normal forms up to order 7 indicate that it should hold true. It appears that in the case of the ideals $J_2$ and $J_7$, one of equations of the normal form is linear, whereas in the case of the ideals $J_3$ and $J_6$ all three equations of the normal form are nonlinear.

\section*{Acknowledgments}
Bo Huang's work is supported by the National Natural Science Foundation of China under Grant No. 12101032. Ivan Mastev and Valery Romanovski acknowledge the support of the Slovenian Research and Innovation Agency (core research program P1-0306).

\end{document}